\newtheorem{lemma}{Lemma}
\newtheorem{theorem}{Theorem}
\newcommand {\N} {\mathbb{N}}
\newcommand {\ve} {\varepsilon}
\def\blfootnote{\xdef\@thefnmark{}\@footnotetext}\makeatother
\title[Lower bounds for the Riemann zeta function]{\bf Lower bounds for the maximum of the Riemann zeta function along vertical lines}
\author{Christoph Aistleitner} 
\address{Department of Mathematics, Graduate School of Science, Kobe University, Kobe 657-8501, Japan}
\email{aistleitner@math.tugraz.at}
\thanks{The author is supported by a Schr\"odinger scholarship of the Austrian Research
Foundation (FWF)}
\subjclass[2010]{11M06,11A05}
\begin{document}

\begin{abstract}
Let $\alpha \in (1/2,1)$ be fixed. We prove that 
$$
\max_{0 \leq t \leq T} |\zeta(\alpha+it)| \geq \exp\left(\frac{c_\alpha (\log T)^{1-\alpha}}{(\log \log T)^\alpha}\right)
$$
for all sufficiently large $T$, where we can choose $c_\alpha = 0.18 (2\alpha-1)^{1-\alpha}$. The same result has already been obtained by Montgomery, with a smaller value for $c_\alpha$. However, our proof, which uses a modified version of Soundararajan's ``resonance method'' together with ideas of Hilberdink, is completely different from Montgomery's. This new proof also allows us to obtain lower bounds for the measure of those $t \in [0,T]$ for which $|\zeta(\alpha+it)|$ is of the order mentioned above.
\end{abstract}

\date{}
\maketitle

\section{Introduction and statement of results}

The Lindel\"of Hypothesis asserts that for every $\ve>0$ we have
$$
|\zeta(1/2+it)| = \mathcal{O} \left(t^\ve\right) \qquad \textrm{as $t \to \infty$.} 
$$
Assuming the Riemann Hypothesis, one can even show that
$$
|\zeta(1/2+it)| = \mathcal{O} \left( \exp \left(\frac{c \log t}{\log \log t} \right) \right)  \qquad \textrm{as $t \to \infty$} 
$$
(see~\cite{chand}; here, and in the sequel, we write $\exp(x)$ for $e^x$, and we write $c$ and $c_\alpha$ for generic positive constants). On the other hand, it is known that
\begin{equation} \label{montlow}
\max_{0 \leq t \leq T} \left| \zeta(1/2+it) \right| = \Omega \left( \exp \left(\frac{c \sqrt{\log T}}{\sqrt{\log \log T}} \right) \right) \qquad \textrm{as $t \to \infty$}; 
\end{equation}
this was proved by Montgomery~\cite[Theorem 2]{mont} under the assumption of the Riemann Hypothesis, and by Balasubramanian and Ramachandra~\cite{bala} unconditionally. Soundararajan~\cite{sound} gave an alternative proof; in his paper he introduced the \emph{resonance method}, which is also the main tool in the proof in the present paper.\\

Concerning the order of $|\zeta(\alpha+it)|$ for $\alpha \in (1/2,1)$, Montgomery~\cite[Theorem 1]{mont} proved that
\begin{equation} \label{mont}
\max_{0 \leq t \leq T} |\zeta(\alpha+it)| = \Omega \left( \exp \left( \frac{c_\alpha (\log T)^{1-\alpha}}{(\log \log T)^\alpha}\right) \right) \qquad \textrm{as $t \to \infty$},
\end{equation}
where we can take $c_\alpha = (\alpha-1/2)^{1/2}/20$ (he also showed that assuming the Riemann Hypothesis we may take $c_\alpha = 1/20$ for all $\alpha \in (1/2,1)$). Based on probabilistic arguments, Montgomery conjectured that the lower bound in~\eqref{mont} is optimal, except for the precise value of $c_\alpha$.\footnote{The situation for $\alpha=1/2$ seems to be more dubious. Montgomery wrote that for $\alpha=1/2$ the ``situation is more delicate'', but he ``still suggests'' that~\eqref{montlow} could be optimal. On the other hand, Farmer, Gonek, and Hughes~\cite{farmer} conjectured that $\max_{t\in [0,T]} \left| \zeta(1/2+it)\right| = \exp \left( (1+o(1)) \sqrt{\frac{1}{2} \log T \log \log T}\right)$. See also~\cite{kotnik}.} On the other hand, conditional under the Riemann Hypothesis we have the upper bound
\begin{equation} \label{riemupp}
|\zeta(\alpha+it)| = \mathcal{O} \left( \exp \left(\frac{c_\alpha (\log t)^{2-2\alpha}}{\log \log t} \right) \right)
\end{equation}
(see~\cite{lamz,titch}). It should be noted that the case $\alpha=1$ is quite different from the case $\alpha \in (1/2,1)$; it is known that $\max_{0 \leq t \leq T} |\zeta(1+it)|$ is of order roughly $\log \log T$. For details, see~\cite{gs}.\\

Hilberdink~\cite{hilber} used a version of Soundararajan's method to obtain lower bounds for $\max_{0 \leq t \leq T} |\zeta(\alpha+it)|$ in the case $\alpha \in (1/2,1)$; however, he could only prove a weaker result than Montgomery, with the term $(\log \log T)^\alpha$ replaced by $\log \log T$ in~\eqref{mont}. In the course of his proof, Hilberdink established an intimate connection between the problem concerning the maximum of the Riemann zeta function and certain sums involving greatest common divisors (GCD sums), which forms a cornerstone of the proof in the present paper. These GCD sums have already appeared in the context of several other problems, from Diophantine approximation to linear algebra and convergence problems for series of dilated functions; see for example~\cite{abs,gal,hed}. A detailed exposition of the connection of these GCD sums with other areas of mathematics can be found in~\cite{abs2}. A recent manuscript of Lewko and Radziwi{\l}{\l} intensifies the connection between GCD sums and 
properties of the Riemann zeta function, see~~\cite{lewko}.\\

The main result of the present paper is the following theorem.

\begin{theorem} \label{th1}
Let $\alpha \in (1/2,1)$ be fixed. Then we have
$$
\max_{0 \leq t \leq T} |\zeta(\alpha+it)| \geq \exp\left(\frac{c_\alpha (\log T)^{1- \alpha}}{(\log \log T)^\alpha}\right)
$$
for all sufficiently large $T$, where we can choose
$$
c_\alpha = 0.18 (2\alpha-1)^{1-\alpha}.
$$
\end{theorem}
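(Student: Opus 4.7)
The plan is to apply Soundararajan's resonance method in the form adapted to the range $\alpha \in (1/2,1)$ by Hilberdink. The starting point is the trivial inequality
$$
\max_{0 \leq t \leq T} |\zeta(\alpha+it)| \;\geq\; \frac{\bigl|\int \zeta(\alpha+it)\,|R(t)|^2\, w(t)\,dt\bigr|}{\int |R(t)|^2\, w(t)\,dt},
$$
where the ``resonator'' $R(t) = \sum_{n \leq N} r(n)\, n^{-it}$ is a Dirichlet polynomial of length $N = T^{\theta}$ for some small $\theta = \theta(\alpha) > 0$, with nonnegative coefficients $r(n)$ to be chosen, and $w$ is a smooth nonnegative weight concentrated on $[0,T]$ whose Fourier transform is supported in a small neighbourhood of the origin. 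The theorem will follow if I can make the right-hand side of this inequality as large as the claimed bound.

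Expanding $|R(t)|^2 = \sum_{m,n} r(m) r(n) (m/n)^{it}$ and applying Fourier inversion, the denominator equals $\hat{w}(0) \sum_n r(n)^2$ up to a factor $1 + o(1)$. For the numerator I would substitute a truncated Dirichlet series approximation $\zeta(\alpha + it) = \sum_{k \leq K} k^{-\alpha-it} + (\text{small error})$, valid for $\alpha \in (1/2, 1)$ and $t \in [0, T]$ with $K$ polynomial in $T$, and observe that only the ``diagonal'' configurations $n = mk$ contribute non-trivially under the Fourier weight. This yields a main term proportional to
$$
\sum_{m \leq N}\; \sum_{\substack{k \geq 1\\ mk \leq N}}\; \frac{r(m)\,r(mk)}{k^{\alpha}},
$$
which is precisely a GCD-type quadratic form of the kind studied by Hilberdink, so that the theorem reduces to lower-bounding the Rayleigh quotient of this form against $\sum_n r(n)^2$.

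To attack this Rayleigh quotient I would take $r$ supported on squarefree integers whose prime divisors lie in a carefully chosen window $[P_1, P_2]$, multiplicative of the shape $r(n) = \prod_{p \mid n} f(p)$. Both the quadratic form and the $\ell^2$ norm then factor as Euler products over the primes in the window, and the problem splits into a local, prime-by-prime extremal problem for $f(p)$. Tuning the global parameters $(\theta, P_1, P_2)$ so that $P_2$ is of order $\log T$ up to slowly varying factors, and evaluating the resulting Euler products via the prime number theorem, produces a lower bound of the target shape $\exp\bigl(c_\alpha (\log T)^{1-\alpha}/(\log\log T)^{\alpha}\bigr)$.

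The hard part will be to execute this optimisation sharply enough to reach the precise constant $c_\alpha = 0.18\,(2\alpha-1)^{1-\alpha}$. Hilberdink's coarser choice of resonator lost a factor $(\log\log T)^{1-\alpha}$ in the exponent, yielding $(\log\log T)^{-1}$ in place of the sharper $(\log\log T)^{-\alpha}$; to recover that factor, and to reach the stated constant, one needs an explicit extremal analysis of the local Euler factors, together with careful control of the off-diagonal contributions to the resonance integral, arising both from the truncation of $\zeta$ and from near-diagonal triples with $n \neq mk$. Accurately bookkeeping all the constants through this chain of estimates will, I expect, be the main obstacle.
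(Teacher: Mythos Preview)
Your plan has a structural obstruction that no amount of local optimisation will remove. You take a resonator $R(t)=\sum_{n\le N} r(n)\,n^{-it}$ with support contained in $[1,N]$, $N=T^{\theta}$. But for \emph{any} nonnegative coefficients supported on $[1,N]$, your diagonal main term $\sum_{m}\sum_{mk\le N} r(m)r(mk)k^{-\alpha}$ is dominated termwise by the GCD quadratic form (when $m\mid n$ the kernels agree, $(\gcd(m,n))^{2\alpha}/(mn)^{\alpha}=(m/n)^{\alpha}$), and Hilberdink proved that
\[
\frac{\displaystyle\sum_{m,n\le N} r(m)r(n)\,\frac{(\gcd(m,n))^{2\alpha}}{(mn)^{\alpha}}}{\displaystyle\sum_{n\le N} r(n)^2}
\;\ll\;\exp\!\left(\frac{c_\alpha(\log N)^{1-\alpha}}{\log\log N}\right).
\]
With $N=T^\theta$ this caps the method at $\exp\bigl(c(\log T)^{1-\alpha}/\log\log T\bigr)$, which is exactly Hilberdink's bound. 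The missing factor $(\log\log T)^{1-\alpha}$ in the exponent is therefore not the artefact of a ``coarser choice of resonator'' to be repaired by sharper Euler-factor analysis; it is a hard ceiling on \emph{every} resonator whose frequencies lie in $[1,T^{\theta}]$. (This dichotomy between GCD forms over $\{1,\dots,N\}$ and over arbitrary $N$-element sets is discussed explicitly in the paper's concluding remarks.)

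The paper's proof is organised around precisely this point. It keeps the \emph{number} of resonator terms at $2^M\ll T^{2\alpha-1}$, but takes $M\approx(2\alpha-1)\log_2 T$ primes, so that the terms themselves --- products of subsets of the first $M$ primes --- range up to roughly $e^{M\log M}\approx T^{c\log\log T}$, far beyond $T$. That longer support is what lifts the GCD Rayleigh quotient to the target $\exp\bigl(c(\log T)^{1-\alpha}/(\log\log T)^{\alpha}\bigr)$. The price is that ratios $b_k/b_\ell$ can now be uncontrollably close to $1$, which destroys both the $L^2$ bound for the resonator and the off-diagonal control you are assuming a smooth weight delivers for free. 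The paper's new ingredients --- thinning $\mathcal{B}$ to a set $\mathcal{D}$ of well-separated representatives (one per multiplicative interval of width $1+1/T$), working with $|\zeta|^2$ rather than $\zeta$, and designing a piecewise-linear weight $w$ that forces the ``medium-frequency'' contributions to be nonnegative --- exist specifically to repair these breakdowns. Your outline sidesteps all of this by keeping the resonator short, and that is the step that loses the theorem.
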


This results recaptures Montgomery's lower bound, with a better value for the constant in the exponential term. However, they key point of this paper is not so much to improve the constant in Montgomery's result, but rather to present an alternative proof, which overcomes several crucial issues which were unresolved in earlier papers following a similar approach. Our proof also allows us to obtain lower bounds for the measure of those $t$ for which $|\zeta(\alpha+it)|$ is of order $\exp\left(\frac{\tau (\log T)^{1- \alpha}}{(\log \log T)^\alpha}\right)$ for ``small'' values of $\tau$. As far as I know, these are the first such bounds for values of $|\zeta(\alpha+it)|$ which are so close to the conjectured maximal order. Earlier results, such as the very precise ones of Lamzouri~\cite{lamz}, could only give lower bounds for the measure of $\{t \in [0,T]:~|\zeta(\alpha+it)| \geq \kappa\}$ for smaller values of $\kappa$; in the case of~\cite{lamz}, $\kappa$ is restricted to $1 \leq \kappa \leq c_\alpha (\log T)
^{1-\alpha} / \log \log T$.

\begin{theorem} \label{th2}
Assume that $\alpha \in (1/2,1)$, and let $\tau$ be a number satisfying 
\begin{equation} \label{tau}
0 < \tau < \frac{\left(2a-1 \right)^{1-\alpha}}{6}.
\end{equation}
Furthermore, set 
\begin{equation} \label{falpha}
F_{\alpha,\tau} = \left\{ t \in [0,T]:~|\zeta(\alpha+it)| \geq \exp \left( \frac{\tau (\log T)^{1 - \alpha}}{(\log \log T)^\alpha} \right) \right\}.
\end{equation}
Then for all sufficiently large $T$ we have
$$
\textup{meas} (F_{\alpha,\tau}) \geq T^{2\alpha -1 - \beta},
$$
where 
$$
\beta=\beta(\tau) = (6 \tau)^{\frac{1}{1-\alpha}}.
$$
Note that here for all admissible choices of $\alpha$ and $\tau$ we have $2 \alpha - 1 - \beta > 0$.
\end{theorem}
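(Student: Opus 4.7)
The plan is to recycle the resonance argument of Theorem~\ref{th1} and then track not only the height of the maximum but also the measure of the set where $|\zeta(\alpha+it)|$ is large. Define
\[
I = \int_0^T |R(t)|^2 \zeta(\alpha+it)\,dt, \qquad J = \int_0^T |R(t)|^2\,dt
\]
for the resonator $R(t) = \sum_{n \in \mathcal{M}} r(n) n^{-it}$ produced in the proof of Theorem~\ref{th1}, and set $L = (\log T)^{1-\alpha}/(\log \log T)^\alpha$ and $M = \exp(\tau L)$. The hypothesis $\tau < (2\alpha-1)^{1-\alpha}/6$ is strictly stronger than $\tau < c_\alpha$ for the constant $c_\alpha = 0.18(2\alpha-1)^{1-\alpha}$; so, by running the construction with slightly suboptimal parameters, we may assume that the ratio $V := |I|/J$ satisfies $V \ge \exp(3\tau L)$, with slack $V - M \ge V/2$ for $T$ large.

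\textbf{Splitting and Cauchy--Schwarz.} Write $|I| \le \int_{F_{\alpha,\tau}} |R|^2 |\zeta|\,dt + \int_{[0,T] \setminus F_{\alpha,\tau}} |R|^2 |\zeta|\,dt$; by definition of $F_{\alpha,\tau}$ the second piece is at most $MJ$. For the first piece one uses
\[
\int_{F_{\alpha,\tau}} |R|^2 |\zeta|\,dt \le \|R\|_\infty^2 \cdot \textup{meas}(F_{\alpha,\tau})^{1/2} \left(\int_0^T |\zeta(\alpha+it)|^2\,dt\right)^{1/2},
\]
and absorbs the last factor via the classical second-moment bound $\int_0^T |\zeta(\alpha+it)|^2\,dt \ll T$, valid for every fixed $\alpha > 1/2$. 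Combining with $|I| \ge VJ$ and the slack $V - M \ge V/2$ yields
\[
\textup{meas}(F_{\alpha,\tau}) \gg \frac{V^2 J^2}{T\,\|R\|_\infty^4}.
\]

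\textbf{Translating to powers of $T$.} Cauchy--Schwarz gives $\|R\|_\infty^2 \le (\sum_n r(n))^2 \le |\mathcal{M}|\,\|r\|_2^2$, while the standard mean-value computation delivers $J \sim T\|r\|_2^2$ provided the support of $R$ sits in $[1,N]$ with $N = o(T)$. Combining we obtain the cleaner bound $\textup{meas}(F_{\alpha,\tau}) \gg V^2 T / |\mathcal{M}|^2$. A direct count of the support of the resonator from the proof of Theorem~\ref{th1} (re-run with the suboptimal parameter choice above) shows that $|\mathcal{M}| \le T^{(1-\alpha)+\beta/2 + o(1)}$, with $\beta = (6\tau)^{1/(1-\alpha)}$; inserted above this yields $\textup{meas}(F_{\alpha,\tau}) \ge T^{2\alpha-1-\beta}$, as claimed. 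The positivity of the exponent then follows from the hypothesis $\tau < (2\alpha-1)^{1-\alpha}/6$, which is exactly the condition $\beta < 2\alpha-1$.

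\textbf{Main obstacle.} The delicate point is the support-size estimate in the final step: one must control the trade-off between (i) the ratio $V$ that the resonator achieves and (ii) the cardinality $|\mathcal{M}|$ of its support. Shrinking $|\mathcal{M}|$ degrades $V$ quickly through the underlying GCD sum, and the precise shape $\beta = (6\tau)^{1/(1-\alpha)}$ together with the admissible range $\tau < (2\alpha-1)^{1-\alpha}/6$ are the quantitative fingerprints of this trade-off as it arises in the optimisation of Theorem~\ref{th1}. Once the support estimate is extracted from that argument, everything else is elementary Cauchy--Schwarz and the trivial mean-value bound for $\zeta$.
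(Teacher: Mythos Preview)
Your proposal has several genuine gaps that prevent it from going through.

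First, the quantity you call $I=\int_0^T |R|^2\zeta$ is \emph{not} what the proof of Theorem~\ref{th1} controls. That proof works with the second-moment integral $\int_{T^{1-\alpha}}^{T} |\zeta(\alpha+it)A(t)|^2 w(t)\,dt$; nowhere does it produce a lower bound for the first-moment quantity $\int |A|^2\zeta$. So ``recycling Theorem~\ref{th1}'' cannot give you $|I|/J\ge\exp(3\tau L)$ without an entirely separate computation.

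Second, your mean-value step $J\sim T\|r\|_2^2$ is justified by assuming ``the support of $R$ sits in $[1,N]$ with $N=o(T)$''. This hypothesis is false for the resonator of Theorem~\ref{th1}: the integers $d_k$ are products of the first $M\approx\beta\log^{(2)}T$ primes and can be as large as $\prod_{r\le M}p_r$, which is of size roughly $e^{cM\log M}\approx T^{c\log\log T}$, far beyond $T$. (The paper does obtain $J\ll KT\log K$, but via the separation property of Lemma~\ref{lemma1c}, not via the standard mean-value theorem.)

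Third, and most damaging, the support bound $|\mathcal{M}|\le T^{(1-\alpha)+\beta/2+o(1)}$ is asserted without argument and is not what the construction gives. With $M=\lceil\beta\log^{(2)}T\rceil$ the construction yields $|\mathcal{M}|=K\le 2^M\le T^\beta$; your claimed exponent $(1-\alpha)+\beta/2$ would require $\beta\le 2(1-\alpha)$, which is \emph{not} implied by the hypothesis $\beta<2\alpha-1$ (indeed it fails whenever $\alpha>3/4$ and $\beta$ is chosen near $2\alpha-1$). With the correct bound $K\le T^\beta$, your Cauchy--Schwarz inequality only delivers $\textup{meas}(F_{\alpha,\tau})\gg V^2 T^{1-2\beta}$, which falls short of $T^{2\alpha-1-\beta}$ precisely when $\beta>2(1-\alpha)$.

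The paper's actual proof avoids all of this: it keeps the second-moment integral $\int|\zeta A|^2 w$ from Theorem~\ref{th1} (with $M=\lceil\beta\log^{(2)}T\rceil$), splits it over $F_{\alpha,\tau}$ and its complement, bounds the complement by $\exp(2\tau L)\int|A|^2$, and on $F_{\alpha,\tau}$ uses only the crude pointwise bounds $|\zeta(\alpha+it)|^2\ll T^{2-2\alpha}$ and $|A(t)|^2\le K^2\le K T^\beta$. No Cauchy--Schwarz, no second-moment input for $\zeta$, and the exponent $2\alpha-1-\beta$ drops out directly from $KT/(T^{2-2\alpha}\cdot KT^\beta)$.
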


The outline of the remaining part of this paper is as follows. In Section~\ref{sechilber} below, we recapitulate the main ingredients in the arguments of Hilberdink and Soundararajan, in order to expose the problems which arise in this approach. In Section~\ref{secproof} we show how to overcome some of the limitations of this argument, and prove Theorem~\ref{th1}. Section~\ref{seclemma} contains the proofs of several lemmas which are stated in Section~\ref{secproof}, but not proved there. In Section~\ref{secth2} we prove Theorem~\ref{th2}, and the final Section~\ref{secconc} contains some concluding remarks.\\

\section{The Hilberdink--Soundararajan argument revisited} \label{sechilber}
Let $\alpha \in [1/2,1)$ and $T$ be given (and assume that $T$ is ``large''). Let $\mu$ be a ``small'' constant, and let $M$ denote the integer nearest to $\mu (\log T)/(\log \log T)$. Let $(p_r)_{r \geq 1}$ denote the sequence of primes, sorted in increasing order. Let $\mathcal{B}=\{b_1, \dots, b_N\}$ denote the set of the $N=2^M$ positive integers of the form
\begin{equation} \label{prod}
\prod_{r=1}^M p_{r}^{\beta_r}, \qquad (\beta_1, \dots, \beta_M) \in \{0,1\}^M.
\end{equation}
In our argument, the GCD sum 
\begin{equation} \label{gcdsum}
\sum_{1 \leq k,\ell \leq N} \frac{(\gcd(b_k,b_\ell))^{2 \alpha}}{(b_k b_\ell)^\alpha}
\end{equation} 
will play an important role. By the specific structure of the numbers $b_1, \dots, b_N$, the sum~\eqref{gcdsum} can be easily calculated. For every fixed $k$ we have
$$
\sum_{1 \leq \ell \leq N} \frac{(\gcd(b_k,b_\ell))^{2 \alpha}}{(b_k b_\ell)^\alpha} = \prod_{r=1}^M \left( 1+p_r^{-\alpha} \right),
$$
and thus by the prime number theorem the value of~\eqref{gcdsum} is
\begin{equation} \label{gcdsum2}
N \prod_{r=1}^M \left( 1+p_r^{-\alpha} \right) \approx N \exp \left(\frac{c_\alpha (\log N)^{1- \alpha}}{(\log \log N)^\alpha}\right)
\end{equation}
(this observation already appears in~\cite{gal}).\\

We want to prove that $|\zeta(\alpha+it)|$ is large for some value of $t \in [0,T]$. To do so, the main idea of Soundararajan's \emph{resonance method} in~\cite{sound} is (roughly speaking) to construct a function $A(t)$ such that 
\begin{equation} \label{intsound}
\int_0^T \zeta(\alpha+it) |A(t)|^2 dt
\end{equation}
is ``large'', while at the same time $\int_0^T |A_N(t)|^2~dt$ is ``small''. This provides the desired lower bound, since $\max_{0 \leq t \leq T} |\zeta(\alpha+it)|$ must be at least as large as the quotient of the absolute value of the first integral divided by the second. Hilberdink's idea in~\cite{hilber} is to replace the integral in~\eqref{intsound} by
\begin{equation} \label{sounint}
\int_0^T |\zeta(\alpha+it)|^2 |A(t)|^2 dt,
\end{equation} 
and to link this integral to a GCD sum (see below).\\

In the following paragraph we will describe Hilberdink's version of the resonance method in a simplified form. We assume that $\alpha \in (1/2,1)$. By a classical approximation theorem for the Riemann zeta function we have
\begin{equation} \label{zetaapprox}
\zeta(\alpha+it) = \sum_{n \leq x} n^{-\alpha-it} + \frac{x^{1-\alpha-it}}{\alpha+it-1} + \mathcal{O} \left( x^{-\alpha} \right)
\end{equation}
uniformly for all $x$ satisfying $2 \pi x/C \geq |t|$, where $C$ is a given constant greater than 1 (see~\cite[Theorem 4.11]{titch}). We choose $x=t$ in~\eqref{zetaapprox}, and have
$$
\zeta(\alpha+it) = \sum_{n \leq t} n^{-\alpha-it} + \mathcal{O} \left(t^{-\alpha} \right)
$$
uniformly for $t \in [0,T]$, which implies that
\begin{eqnarray} 
\left|\zeta(\alpha+it)\right|^2 & = & \left| \sum_{n \leq t} \frac{1}{n^{\alpha+it}} \right|^2 + \mathcal{O} \left(t^{1-2\alpha} \right), \label{equ1}
\end{eqnarray}
also uniformly for $t \in [0,T]$. Let $A(t)$ the function given by
\begin{equation} \label{ant}
A(t) = \sum_{k=1}^N b_k^{it},
\end{equation}
where $b_1, \dots, b_N$ are the numbers defined at the beginning of this section. Note that $A(t)$ may also be written as a finite Euler product, since
\begin{equation} \label{ant2}
A(t) = \prod_{r=1}^M (1 + p_r^{it}).
\end{equation}
By~\eqref{equ1} we have
\begin{eqnarray}
& & \int_{0}^T |\zeta(\alpha+it) A(t)|^2 ~dt \nonumber\\
& = & \sum_{1 \leq k,\ell \leq N} \sum_{1 \leq m,n \leq T} \frac{1}{(mn)^\alpha} \int_{\max (m,n)}^T \left( \frac{m b_k}{n b_\ell} \right)^{it} dt + \mathcal{O} \left(N^2 T^{2-2\alpha} \right). \label{dobs}
\end{eqnarray}
The double sum in~\eqref{dobs} can be split into a sum over those indices $(k,\ell,m,n)$ for which $m b_k = n b_\ell$, and those indices for which this is not the case. Note that $m b_k = n b_\ell$ whenever $m=j b_\ell/\gcd(b_k,b_\ell)$ and $n=j b_k/\gcd(b_k,b_\ell)$ for some integer $j \geq 1$.\\

So far the particular choice of $M$ did not play any role. However, for the remaining part of the argument the size of $M$ is crucial, since it implies that for given $\ve>0$ by the prime number theorem we have
\begin{equation} \label{size}
b_k \leq \prod_{r=1}^M p_r \leq T^\ve,
\end{equation}
provided that $\mu$ was chosen sufficiently small. Thus for any $k,\ell$ there is at least one solution $(m,n),~1 \leq m,n \leq T^\ve,$ of the equation $m b_k = n b_\ell$, namely $m=b_\ell/\gcd(b_k,b_\ell)$ and $n=b_k/\gcd(b_k,b_\ell)$. Consequently we have
\begin{eqnarray}
\underbrace{\sum_{1 \leq k,\ell \leq N} \sum_{1 \leq m,n \leq T}}_{m b_k = n b_\ell} \frac{1}{(mn)^\alpha} \underbrace{\int_{\max(m,n)}^T \left( \frac{m b_k}{n b_\ell} \right)^{it} dt}_{\geq T - T^\ve} & \gg &
T \sum_{1 \leq k,\ell \leq N} \frac{(\gcd(b_k,b_\ell))^{2 \alpha}}{(b_k b_\ell)^\alpha} \nonumber\label{contria}\\
& \gg & NT \exp \left(c_\alpha \frac{(\log N)^{1- \alpha}}{(\log \log N)^\alpha}\right), \label{contri}
\end{eqnarray}
where we used~\eqref{gcdsum2}. The contribution of those indices $(k,\ell,m,n)$ for which $m b_k \neq n b_\ell$ is small; here it is crucial that by~\eqref{size} the quotients $m b_k/(n b_\ell)$ cannot be arbitrarily close to~1.  The error term on the right-hand side of~\eqref{dobs} is negligible, since $N = 2^M$ is sufficiently small due to our choice of $M$. Thus overall we have
\begin{equation} \label{l2nan}
\int_0^T |\zeta(\alpha+it) A(t)|^2 ~dt \gg  NT \exp \left(\frac{c_\alpha (\log N)^{1- \alpha}}{(\log \log N)^\alpha}\right).
\end{equation}

On the other hand, we have
\begin{eqnarray}
\int_0^T |A(t)|^2 ~dt & = & \int_0^T \sum_{1 \leq k,\ell \leq N} \left(\frac{n_k}{n_\ell}\right)^{it} dt \label{anintpro}\\
& = & NT + 2 \sum_{\substack{1 \leq k, \ell \leq N,\\k \neq \ell}} \int_0^T \left(\frac{n_k}{n_\ell}\right)^{it} dt. \label{summand}
\end{eqnarray}
By~\eqref{size} for every $k,\ell$ the integral on the right-hand side of~\eqref{summand} is $\ll T^{\ve}$, since we have $|(\log (n_k/n_\ell))^{-1}| \ll T^{\ve}$. Thus 
\begin{equation} \label{l2n}
\int_0^T |A(t)|^2 \ll NT + \mathcal{O} \left(N^2 T^{\ve}\right) \ll NT,
\end{equation}
which together with~\eqref{l2nan} proves that 
\begin{eqnarray*}
\max_{0 \leq t \leq T} |\zeta(\alpha+it)| & \gg & \exp \left(\frac{c_\alpha (\log N)^{1- \alpha}}{(\log \log N)^\alpha}\right) \\
& \gg & \exp \left(\frac{c_\alpha (\log T)^{1- \alpha}}{\log \log T}\right).
\end{eqnarray*}

If we could increase the value of $M$ (and accordingly, also the value of $N$), then the value of the GCD sum in~\eqref{contri} would also be increased, leading to a larger lower bound for $\max_{0 \leq t \leq T} |\zeta(\alpha+it)|$. With a view to Montgomery's result it would be reasonable to try to increase the value of $M$ from $\mu \log T / \log \log T$ to $\mu \log T$ for some ``small'' $\mu$; this would mean that the Dirichlet polynomial in~\eqref{ant} would still be a sum of $N = 2^M \ll T^\ve$ terms, while, however, some of the terms $b_k$ would be significantly larger than $T$. However, increasing the value of $M$ is highly problematic, since there are several positions where the validity of the argument depends on the specific choice of $M$ which we have made at the beginning. Choosing $M$ significantly larger would
\begin{itemize}
\item lead to a larger error term on the right-hand side of~\eqref{dobs}, which would no longer be dominated by the other terms,
\item cause problems when estimating the contribution of those indices in~\eqref{dobs} for which $m b_k \neq n b_\ell$; it is crucial for the argument that the ratio $m b_k / (n b_\ell)$ is bounded away from 1 whenever $m b_k \neq n b_\ell$, since otherwise we cannot control the contribution of these terms anymore (this problem is reflected in the Montgomery--Vaughan inequality),
\item cause problems in calculating the integral on the left-hand side of~\eqref{anintpro}, since here it is crucial that the ratio $b_k/b_\ell$ is bounded away from 1 for $k \neq \ell$, which requires the estimate~\eqref{size}.
\end{itemize}

Overall, there are several crucial issues to address when attempting to increase the value of $M$ in this argument, all centered on problems around controlling the ratios $b_k/b_\ell$ and $m b_k / n b_\ell$. In the following section we will resolve these issues, by introducing some novel ideas. The most significant ones are a) replacing the finite Euler product $A(t)$ by a different Dirichlet polynomial, whose components are separated away from each other, and which is constructed by selecting some of the elements of the sum~\eqref{ant} as representatives of classes according to a classification with respect to their size, and b) introducing a weight function $w(t)$ to~\eqref{sounint} which guarantees that the contribution to~\eqref{dobs} of those indices $(m,n,k,\ell)$ for which $m b_k /(n b_\ell) \approx 1$ is non-negative.\\

\emph{Remark:~} When I was already finished writing this manuscript I learned about a paper of Voronin~\cite{voronin}, which was published in 1988. In this paper Voronin uses the resonance method in exactly the same way as Hilberdink, so probably proper credit for the development of this method must be given to Voronin. However, it seems that Voronin's paper has never been cited by anybody, and has been totally overlooked by the scientific community. Voronin obtains exactly the same lower bound as Hilberdink (that is, a weaker one than Montgomery), and his argument suffers from the same limitations as Hilberdink's.

\section{Proof of Theorem~\ref{th1}} \label{secproof}

Throughout the rest of this paper, we assume that $\alpha \in (1/2,1)$ is fixed. Let $T$ be given. Constants implied by the symbols ``$\ll$'', ``$\gg$'' and ``$\mathcal{O}$'' may depend on $\alpha$, but not on $T$ or anything else. We will repeatedly assume that $T$ is ``sufficiently large'', which means that we could assume at the very beginning that $T \geq T_0(\alpha)$. We set
\begin{equation} \label{M}
M = \left\lceil (2\alpha-1) \log^{(2)} T \right\rceil.
\end{equation}
Let $(p_r)_{r \geq 1}$ denote the sequence of primes, sorted in increasing order. Let $\mathcal{B}=\{b_1, \dots, b_N\}$ denote the set of the $N=2^M$ positive integers of the form
\begin{equation} \label{prod2}
\prod_{r=1}^M p_{r}^{\beta_r}, \qquad (\beta_1, \dots, \beta_M) \in \{0,1\}^M.
\end{equation}
If a specific number $b_k$ can be represented in the form~\eqref{prod2} for a specific vector $(\beta_1,\dots, \beta_M)$, we say that $(\beta_1,\dots,\beta_M)$ is the exponent vector corresponding to $b_k$. For two numbers $b_k,b_\ell$ with exponent vectors $(\beta_1, \dots, \beta_M)$ and $(\gamma_1, \dots, \gamma_M)$, respectively, we define the distance $\delta(b_k,b_\ell)$ by setting
$$
\delta(b_k,b_\ell) = \sum_{r=1}^M |\beta_r - \gamma_r|.
$$
Thus $\delta(b_k,b_\ell)$ is the number of primes which appear either in the representation of $b_k$ only or in the representation of $b_\ell$ only.\\

As noted in the previous section, one of the main critical issues is that we cannot control the ratios $b_k/b_\ell$ of the elements of $\mathcal{B}$; in principle, it is possible that this ratio is very close to 1 for a large number of pairs $(k,\ell)$. However, the main contribution in the GCD sum does \emph{not} come from those pairs $(b_k,b_\ell)$ whose ratio is close to 1 (bot not equal to 1), because for such a pair the distance $\delta(b_k,b_\ell)$ must be large (it is easily seen that if $b_k$ and $b_\ell$ only have a few different prime factors, then $b_k/b_\ell$ cannot be arbitrarily close to 1). This means that in this case $b_k$ and $b_\ell$ must have many different prime factors, and consequently 
$$
\frac{(\gcd(b_k,b_\ell))^{2 \alpha}}{(b_k b_\ell)^\alpha}
$$
must be small. Therefore, those pairs $(b_k,b_\ell)$ whose ratio $b_k/b_\ell$ is very close to 1 do not contribute significantly to the GCD sum, but they cause severe problems in the upper bound for the square-integral of $A(t)$. As we will show in the sequel we can avoid these problems, roughly speaking by identifying numbers $b_k \neq b_\ell$ whose quotient is very close to 1, and replacing them by a single representative. In the following paragraphs, we will make these observations and definitions precise.\\

The following lemma shows that we can restrict the GCD sum to those pairs $(b_k,b_\ell)$ which have a specific, relatively small distance $\delta$. Recall that by~\eqref{M} there is a direct connection between the values of $T,~M$ and $N$, and that consequently the phrase ``$T$ is sufficiently large'' can also be read as ``$T$, $M$ and $N$ are sufficiently large''. 
\begin{lemma} \label{lemma1}
Set
\begin{equation} \label{R}
R = \left\lfloor \frac{M^{1-\alpha}}{e (\log M + \log \log M)^\alpha} \right\rfloor.
\end{equation}
Then for sufficiently large $T$ for every index $k \in \{1, \dots, N\}$ we have
$$
\sum_{\substack{1 \leq \ell \leq N,\\ \delta(b_k,b_\ell) = R}} \frac{(\gcd(b_k,b_\ell))^{2\alpha}}{(b_k b_\ell)^\alpha} \geq e^{M^{1-\alpha} / (2.72 (\log M)^{\alpha})}.
$$
\end{lemma}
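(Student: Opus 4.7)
The approach rests on three observations: the inner sum is independent of $k$ and equals an elementary symmetric polynomial in $p_1^{-\alpha},\dots,p_M^{-\alpha}$; this polynomial can be bounded below by AM--GM; and the resulting bound can be evaluated using the prime number theorem and Stirling's formula, with considerable slack relative to the target constant $2.72$.

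First, I would reduce the sum. Writing $b_k = \prod_r p_r^{\beta_r}$ and $b_\ell = \prod_r p_r^{\gamma_r}$ with $\beta_r,\gamma_r\in\{0,1\}$, a termwise computation gives
$$
\frac{\gcd(b_k,b_\ell)^{2\alpha}}{(b_k b_\ell)^\alpha} = \prod_{r=1}^M p_r^{2\alpha\min(\beta_r,\gamma_r)-\alpha(\beta_r+\gamma_r)} = \prod_{r:\,\beta_r\neq\gamma_r} p_r^{-\alpha},
$$
because the exponent vanishes when $\beta_r=\gamma_r$ and equals $-\alpha$ otherwise. For fixed $k$, the map $b_\ell\mapsto\{r:\beta_r\neq\gamma_r\}$ is a bijection from $\{b_\ell:\delta(b_k,b_\ell)=R\}$ onto the $R$-element subsets of $\{1,\dots,M\}$, so
$$
\sum_{\substack{1\leq\ell\leq N,\\ \delta(b_k,b_\ell)=R}} \frac{\gcd(b_k,b_\ell)^{2\alpha}}{(b_k b_\ell)^\alpha} = \sum_{\substack{S\subseteq\{1,\dots,M\},\\|S|=R}} \prod_{r\in S} p_r^{-\alpha},
$$
the $R$-th elementary symmetric polynomial in $p_1^{-\alpha},\dots,p_M^{-\alpha}$; crucially, this quantity is the same for every $k$.

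Next, I would bound this symmetric polynomial below by AM--GM. Each prime index $r$ lies in exactly $\binom{M-1}{R-1}$ of the $R$-subsets, so the geometric mean of the $\binom{M}{R}$ summands equals $\bigl(\prod_r p_r^{-\alpha}\bigr)^{R/M}$, yielding
$$
\sum_{|S|=R}\prod_{r\in S}p_r^{-\alpha} \geq \binom{M}{R}\exp\!\left(-\frac{\alpha R}{M}\sum_{r=1}^M \log p_r\right).
$$
The remaining work is asymptotic bookkeeping: by the prime number theorem $\sum_{r=1}^M\log p_r = M(\log M+\log\log M-1)+o(M)$, and by Stirling $\log\binom{M}{R}=R\log(M/R)+R+O(\log M)$. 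From the definition of $R$ in~\eqref{R} one gets $\log(M/R)=\alpha\log M+1+\alpha\log\log M+o(1)$, so the leading contributions of sizes $\alpha R\log M$ and $\alpha R\log\log M$ cancel between $\log\binom{M}{R}$ and $\frac{\alpha R}{M}\sum\log p_r$, and one is left with
$$
\log\sum_{|S|=R}\prod_{r\in S}p_r^{-\alpha} \geq (2+\alpha)R + o(R).
$$
Since $R=\frac{M^{1-\alpha}}{e(\log M)^\alpha}(1+o(1))$ by~\eqref{R}, and $(2+\alpha)/e \geq 2.5/e > 1/2 > 1/2.72$ for every $\alpha\in(1/2,1)$, this comfortably exceeds $M^{1-\alpha}/(2.72(\log M)^\alpha)$ for $T$ sufficiently large.

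The only obstacle is therefore the careful tracking of the $o(1)$-terms coming from PNT, Stirling, and the replacement of $(\log M+\log\log M)^\alpha$ by $(\log M)^\alpha$---which is presumably why the author inserts the slightly awkward $\log\log M$ inside the denominator of $R$ in~\eqref{R}. The gap between the asymptotic constant $(2+\alpha)/e\gtrsim 0.92$ and the target $1/2.72\approx 0.37$ absorbs these errors with considerable room to spare.
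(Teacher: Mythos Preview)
Your proof is correct and follows the same skeleton as the paper's argument: reduce the sum (which is indeed independent of $k$) to $\binom{M}{R}$ times a lower bound on a ``typical'' summand, then control both factors via Stirling and prime asymptotics. The one substantive difference is that where the paper crudely bounds every term by the smallest one, replacing each $p_r^{-\alpha}$ by $(M(\log M+\log\log M))^{-\alpha}$ to obtain
\[
\sum_{\delta(b_k,b_\ell)=R} \frac{(\gcd(b_k,b_\ell))^{2\alpha}}{(b_k b_\ell)^\alpha} \;\geq\; \binom{M}{R}\,\bigl(M(\log M+\log\log M)\bigr)^{-\alpha R},
\]
you instead apply AM--GM to replace the minimum by the geometric mean, and then feed in the sharper asymptotic $\frac{1}{M}\sum_{r\le M}\log p_r = \log M+\log\log M-1+o(1)$. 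This buys you an extra factor of roughly $e^{(1+\alpha)R}$: the paper ends up with $\log(\text{sum})\ge R+o(R)$, whereas you get $(2+\alpha)R+o(R)$. Both clear the stated threshold $M^{1-\alpha}/(2.72(\log M)^\alpha)$ with room to spare, so for the purposes of Lemma~\ref{lemma1} the refinement is not needed; but your version would yield a somewhat larger constant $c_\alpha$ in Theorem~\ref{th1} if carried through.
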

In order to proceed with our proof of Theorem~\ref{th1}, for the moment we will take Lemma~\ref{lemma1} for granted. Its proof will be given, as well as the proofs of all other lemmas stated in the present section, in the subsequent Section~\ref{seclemma}.\footnote{To put the conclusion of Lemma~\ref{lemma1} into relation, we note that for every set $n_1, \dots, n_N$ of distinct positive integers we have $\sum_{1 \leq k,\ell \leq N} \frac{(\gcd(n_k,n_\ell))^{2 \alpha}}{(n_k n_\ell)^\alpha} \ll N \exp \left( \frac{c_\alpha (\log N)^{1-\alpha}}{(\log \log N)^\alpha}\right).$ See~\cite{abs,lewko}. Accordingly our choice of $b_1, \dots, b_N$ and $R$ is essentially optimal, except for the value of the constant in the exponential term.}\\

For $j = 1, 2, \dots$ we define the sets
\begin{equation} \label{bj}
\mathcal{B}_j = \left\{b \in \mathcal{B}:~\left(1+\frac{1}{T}\right)^{j-1} \leq b < \left(1 + \frac{1}{T} \right)^{j} \right\}.
\end{equation}
Then every element of $\mathcal{B}$ is contained in one set $\mathcal{B}_j$, for some appropriate $j$. However, it is possible that some of the sets $\mathcal{B}_j$ contain more than one element of $\mathcal{B}$. If this is the case, then all elements of $\mathcal{B}_j$ have a pairwise ratio which is very close to~1, which would cause serious problems in a calculation along the lines of~\eqref{anintpro}--\eqref{summand}.\footnote{The question whether there really exist sets $\mathcal{B}_j$ containing more than one element, or how many such sets there are, seems to be very difficult; it can be expressed as a problem concerning linear forms in logarithms of integers. We could avoid many of the difficulties in the proof of Theorem~\ref{th1} if we could directly show that $\int_0^W \left|\prod_{r=1}^V (1+p_r^{it})\right|^2 dt \ll 2^V W$ whenever $W \geq e^{c V}$ for some appropriate constant $c$; this would be the necessary upper bound for the square-integral of $A$. Note that the Montgomery--Vaughan mean value 
theorem does not suffice here, since it only works for $W \geq e^{c V \log V}$.} Consequently, we replace all elements of such a set $\mathcal{B}_j$ by one single representative. As the arguments below will show, this allows us to keep the lower bound for the GCD sum, and at the same time also allows us to obtain the desired upper bound for the square-integral of the resonator function. An important observation for the whole argument is the fact that whenever $1 \leq \delta(b_k,b_\ell) \leq 2R$, where $R$ is the number from Lemma~\ref{lemma1}, then $b_k$ and $b_\ell$ cannot be contained in the same set $\mathcal{B}_j$; this observation is stated as a lemma below.
\begin{lemma} \label{lemma1a}
Assume that for two elements $b_k$ and $b_\ell$ of $\mathcal{B}$ we have $1 \leq \delta(b_k,b_\ell) \leq 2R$, where $R$ is defined in~\eqref{R}. Then $b_k$ and $b_\ell$ cannot be contained in the same set $\mathcal{B}_j$ for some $j$, provided that $T$ is sufficiently large.
\end{lemma}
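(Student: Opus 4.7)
The plan is to reduce the statement to a ratio estimate for two elements of $\mathcal{B}$ that share most of their prime factors, and then bound the denominator of that ratio using the arithmetic structure coming from~\eqref{prod2}.

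First I would set $d = \gcd(b_k,b_\ell)$, $u = b_k/d$, $v = b_\ell/d$. Since every element of $\mathcal{B}$ is squarefree with prime factors drawn from $\{p_1,\dots,p_M\}$, both $u$ and $v$ are squarefree, $\gcd(u,v)=1$, and $uv$ has exactly $\delta(b_k,b_\ell)$ distinct prime factors. The hypothesis $\delta(b_k,b_\ell)\geq 1$ forces $u\neq v$; by symmetry I may assume $b_k>b_\ell$, so $u>v\geq 1$ are distinct positive integers, and
$$
\frac{b_k}{b_\ell}=\frac{u}{v}\;\geq\;1+\frac{1}{v}.
$$
On the other hand, definition~\eqref{bj} immediately implies that any two elements lying in a common $\mathcal{B}_j$ have ratio strictly less than $1+1/T$. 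Hence the lemma reduces to showing $v<T$.

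To bound $v$, I would use that it is a product of at most $\delta(b_k,b_\ell)\leq 2R$ primes from $\{p_1,\dots,p_M\}$, giving the crude estimate $v\leq p_M^{2R}$. Taking logarithms, invoking the prime number theorem in the form $\log p_M=\log M+\log\log M+o(1)$, and inserting the definition~\eqref{R} of $R$ yields
$$
\log v \;\leq\; 2R\log p_M \;\leq\; \frac{2\,M^{1-\alpha}\bigl(\log M+\log\log M\bigr)^{1-\alpha}}{e}\bigl(1+o(1)\bigr).
$$

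The final step is to check that the right-hand side above is of strictly smaller order than $\log T$ under the choice~\eqref{M} of $M$; this forces $v\leq T$ for sufficiently large $T$ and closes the argument. I do not expect a genuine obstacle: the whole lemma is essentially the observation that $R$ in~\eqref{R} has been calibrated precisely so that $2R$ primes of size at most $p_M$ cannot multiply up to $T$. The only mild care is to make sure the constants from the prime number theorem in the estimate of $\log p_M$ are absorbed comfortably by the factor $1/e$ coming from~\eqref{R}, so that no boundary case with $\delta=2R$ breaks the inequality.
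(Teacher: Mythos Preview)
Your proposal is correct and follows essentially the same line as the paper: you bound the denominator $v$ of the reduced fraction $b_k/b_\ell$ by $p_M^{2R}$, then use~\eqref{M} and~\eqref{R} to see that $\log v = O\bigl(M^{1-\alpha}(\log M)^{1-\alpha}\bigr) = o(\log T)$, which forces $b_k/b_\ell \geq 1+1/v > 1+1/T$. The paper does the same thing with slightly different bookkeeping (it gets $v \leq \sqrt{T}$ rather than $v<T$), and your closing remark about the factor $1/e$ is beside the point---the real slack is the gap between $M^{1-\alpha}(\log M)^{1-\alpha}$ and $M\asymp \log T$, not any constant.
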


Let $\mathcal{D}$ be the set
\begin{equation*}
\mathcal{D} = \bigcup_{j=1}^\infty~ \min (\mathcal{B}_j).
\end{equation*}
Then $\mathcal{D}$ contains exactly one element from every non-empty set $\mathcal{B}_j$. We set $K = \# \mathcal{D}$, and we write $d_1, \dots, d_K$ for the elements of $\mathcal{D}$, sorted in increasing order. Trivially we have $K \leq N$. As the following Lemma~\ref{lemma1b} shows, whenever there exist indices $k$ and $\ell$ such that $b_k \in \mathcal{B}_j$ and $d_\ell \in \mathcal{B}_j$ for some $j$, then the ratio $b_k/d_\ell$ is relatively close to 1. The subsequent Lemma~\ref{lemma1c} shows that the numbers $d_1,\dots,d_K$ are separated away from each other. Both lemmas follow directly from our definitions, and do not require a detailed proof.

\begin{lemma} \label{lemma1b}
Suppose that there exist $k \in \{1, \dots, N\}$, $l \in \{1, \dots, K\}$, and $j \in \N^+$ such that $b_k \in \mathcal{B}_j$ and $d_\ell \in \mathcal{B}_j$. Then
$$
\frac{b_k}{d_\ell} \in \left[1,1+\frac{1}{T} \right].
$$
\end{lemma}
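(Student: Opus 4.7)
My plan is to argue directly from the definitions, since both $b_k$ and $d_\ell$ are by hypothesis elements of the same dyadic-type block $\mathcal{B}_j$, and $d_\ell$ is in addition the minimum of that block.

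First I would write out the two-sided bounds that membership in $\mathcal{B}_j$ imposes. By definition of $\mathcal{B}_j$ in~\eqref{bj}, both $b_k$ and $d_\ell$ satisfy
$$
\left(1+\frac{1}{T}\right)^{j-1} \leq b_k, d_\ell < \left(1+\frac{1}{T}\right)^{j}.
$$
Next, since $\mathcal{D}$ was defined as the union of the minima of the sets $\mathcal{B}_j$, the element $d_\ell$ must be the smallest member of whichever $\mathcal{B}_j$ contains it; in particular, $d_\ell \leq b_k$. This immediately gives the lower bound $b_k/d_\ell \geq 1$.

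For the upper bound, I would use the bracket bounds above to get
$$
\frac{b_k}{d_\ell} < \frac{(1+1/T)^j}{(1+1/T)^{j-1}} = 1 + \frac{1}{T},
$$
so that the ratio lies in $[1, 1+1/T) \subseteq [1, 1+1/T]$, as claimed.

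There is no real obstacle here: the lemma is essentially a restatement of the fact that any two elements of a geometric interval of ratio $1+1/T$ can differ multiplicatively by at most $1+1/T$, combined with the definition of $d_\ell$ as a minimum. The only thing to be mildly careful about is that one actually needs $d_\ell$ to lie in the same $\mathcal{B}_j$ as $b_k$ (rather than in some other block), and this is exactly the hypothesis.
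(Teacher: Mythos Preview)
Your argument is correct and is exactly the kind of direct verification from the definitions that the paper has in mind; indeed, the paper does not give a detailed proof and simply remarks that the lemma follows immediately from the construction. The only small point you might make explicit is that the sets $\mathcal{B}_j$ are pairwise disjoint (since the intervals $[(1+1/T)^{j-1},(1+1/T)^j)$ are), so that $d_\ell \in \mathcal{D}$ together with $d_\ell \in \mathcal{B}_j$ forces $d_\ell = \min(\mathcal{B}_j)$; but this is implicit in what you wrote.
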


\begin{lemma} \label{lemma1c}
Let $d_k$ and $d_\ell$ be elements of $\mathcal{D}$, and assume that $k < \ell$. Then we have
$$
\frac{d_\ell}{d_k} \geq \left(1 + \frac{1}{T} \right)^{\ell-k-1}.
$$
\end{lemma}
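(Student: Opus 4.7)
The plan is essentially a bookkeeping exercise, as the author already indicates. My strategy is to attach to each representative $d_i \in \mathcal{D}$ the bucket index it comes from, and then exploit the geometric spacing of the intervals $\mathcal{B}_j$.

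First I would introduce, for each $i \in \{1,\dots,K\}$, the unique positive integer $j_i$ such that $d_i \in \mathcal{B}_{j_i}$. Because $\mathcal{D}$ contains exactly one representative from every non-empty bucket and because the buckets $\mathcal{B}_j = [(1+1/T)^{j-1},(1+1/T)^j)$ are pairwise disjoint and ordered increasingly in $j$, the fact that $d_1 < d_2 < \dots < d_K$ forces the sequence $j_1 < j_2 < \dots < j_K$ to be strictly increasing. Hence for $k < \ell$ we obtain the key bookkeeping inequality $j_\ell - j_k \geq \ell - k$.

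Next I would read off the bucket endpoints: $d_k \in \mathcal{B}_{j_k}$ gives $d_k < (1+1/T)^{j_k}$, while $d_\ell \in \mathcal{B}_{j_\ell}$ gives $d_\ell \geq (1+1/T)^{j_\ell - 1}$. Dividing the two inequalities and applying the bound on $j_\ell - j_k$ yields
$$
\frac{d_\ell}{d_k} > (1+1/T)^{j_\ell - j_k - 1} \geq (1+1/T)^{\ell - k - 1},
$$
which is the claim (in fact with strict inequality).

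I do not expect any genuine obstacle here. The lemma is a direct consequence of the construction of $\mathcal{D}$ as a bucket transversal, and the only ``content'' is the observation that the map $i \mapsto j_i$ is strictly increasing, which is immediate from monotonicity of the bucketing.
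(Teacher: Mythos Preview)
Your argument is correct and is precisely the unwinding of the definitions that the paper has in mind; the paper itself does not spell out a proof, merely noting that the lemma follows directly from the construction of $\mathcal{D}$. Your observation that the bucket indices $j_1 < j_2 < \dots < j_K$ are strictly increasing, combined with the endpoint bounds $d_k < (1+1/T)^{j_k}$ and $d_\ell \geq (1+1/T)^{j_\ell-1}$, is exactly what is needed.
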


Now we define a Dirichlet polynomial $A(t)$ by setting
$$
A(t) = \sum_{k=1}^K d_k^{it}.
$$
This definition should be compared with the one in~\eqref{ant} and~\eqref{ant2}. Instead of taking \emph{all} numbers $\mathcal{B} = \{b_1, \dots, b_N\}$ for the definition of $A$ (which means that $A$ is a finite Euler product) we only choose a subset $\mathcal{D} = \{d_1, \dots, d_K\} \subset \mathcal{B}$ of elements which are separated away from each other. Together with Lemma~\ref{lemma1c} this will allow us to obtain the desired upper bound for the square-integral $\int |A(t)|^2~dt$, while preserving the desired lower bound for $\int |\zeta(\alpha+it) A(t)|^2~dt$.\\

By~\eqref{zetaapprox} we have
\begin{equation} \label{zetaapprox2}
\zeta(\alpha+ it) = \sum_{n=1}^T \frac{1}{n^{\alpha+it}} + \mathcal{O} \left(1 \right)
\end{equation}
uniformly for $t \in [T^{1-\alpha},T]$. We can assume that
\begin{equation} \label{canassume}
\max_{0 \leq t \leq T} |\zeta(\alpha+it)| \leq \underbrace{\exp\left(\frac{0.2 (2 \alpha-1)^{1-\alpha} \log T}{\log \log T}\right)}_{=: ~\textup{err}(T)},
\end{equation}
since otherwise the theorem is already proved. Then by~\eqref{zetaapprox2} and~\eqref{canassume} we have
$$
\left|\zeta(\alpha+it)\right|^2 = \left| \sum_{n=1}^{T} \frac{1}{n^{\alpha+it}} \right|^2 + \mathcal{O} \left(\textup{err}(T) \right),
$$
uniformly for $t \in [T^{1-\alpha},T]$. Thus we have
\begin{equation} \label{rhsleft}
|\zeta(\alpha+it) A(t)|^2 = \sum_{1 \leq k,\ell \leq K} \sum_{1 \leq m,n \leq T} \frac{1}{(mn)^\alpha}  \left( \frac{m d_k}{n d_\ell} \right)^{it} + \mathcal{O} \left(\textup{err}(T) |A(t)|^2 \right),
\end{equation}
uniformly for $t \in [T^{1-\alpha},T]$. Since this approximation holds for $t \in [T^{1-\alpha},T]$, we will choose this interval as our range of integration for the resonance method (this is easier to handle than the integration range depending on $m$ and $n$, which appears in~\eqref{dobs}).\\

We can write the double sum on the right-hand side of~\eqref{rhsleft} in the real form
\begin{eqnarray} 
& & \sum_{1 \leq k,\ell \leq K} \sum_{1 \leq m,n \leq T} \frac{1}{(mn)^\alpha} \left( \frac{m d_k}{n d_\ell} \right)^{it} \nonumber\\
& = & \sum_{\substack{1 \leq k, \ell \leq K}} \sum_{\substack{1 \leq m, n \leq T}} \frac{1}{(mn)^\alpha} \cos \left(\left| \log \left(\frac{m d_k}{n d_\ell}\right) \right| t\right).\label{rhswriteex}
\end{eqnarray}
We have to distinguish between three different kinds of frequencies of the cosine-terms appearing in this expression.
\begin{itemize}
\item Frequencies which are very small (type 1 frequencies):~these frequencies occur when $m d_k /(n d_\ell)\approx 1$. These are the ``good'' frequencies which give the positive contributions which we want to have, since in this case $\int_{T^{1-\alpha}}^T \cos \left(\left| \log \left(\frac{m d_k}{n d_\ell}\right) \right| t\right) ~dt \approx T$. The total contribution of these frequencies can be estimates by the connection with GCD sums.\\
\item Frequencies of ``medium'' size (type 2 frequencies): these are terms of the form $\cos a t$ where the frequency $a$ is rather small, but not so close to zero as to guarantee that $\int_{T^{1-\alpha}}^T \cos at ~dt \approx T$. For these frequencies the absolute value of the integral $\int_{T^{1-\alpha}}^T \cos at ~dt$ can be of order roughly $T$, but we cannot be sure whether the integral itself is positive or negative. We solve the problem by replacing the integrand $|\zeta(\alpha+it) A(t)|^2$ by $|\zeta(\alpha+it) A(t)|^2 w(t)$ for an appropriate \emph{weight function} $w$. This will make sure that the contribution of these frequencies is non-negative, without affecting the positive contribution of the ``good'' terms from above.\\
\item Large frequencies (type 3 frequencies): these are terms of the form $\cos a t$ where the frequency $a$ is relatively large. In this case the absolute value of the integral $\int_{T^{1-\alpha}}^T \cos a t ~dt$ is small in comparison with $T$, but there are many combinations of indices $k,\ell,m,n$ leading to such frequencies. We will use an estimate somewhat similar to the Montgomery--Vaughan inequality to guarantee that the total contribution of these terms is sufficiently small.\\
\end{itemize}
For the exact classification of frequencies into types 1--3, we have the following rules.\\
\begin{itemize}
\item Type 1: Frequencies in $\left[0,\frac{1}{T}\right]$.
\item Type 2: Frequencies in $\left(\frac{1}{T},\frac{1}{2T^{1-\alpha}}\right]$
\item Type 3: Frequencies in $\left(\frac{1}{2T^{1-\alpha}},\infty\right)$.
\end{itemize}
This classification will appear again in Lemmas~\ref{lemma2}--\ref{lemma4} below.\\

Based on the remarks made above, instead of~\eqref{sounint} we will rather estimate the integral
\begin{eqnarray}
\int_{T^{1-\alpha}}^{T} |\zeta(\alpha+it) A(t)|^2 w(t) ~dt,  \label{rhswrite2}
\end{eqnarray}
where the weight function $w(t)$ is given by
\begin{equation} \label{w}
w(t) = \left\{ \begin{array}{ll} 3 - \frac{t}{T}  & \textrm{for $t \in [T^{1-\alpha}, 2 T^{1-\alpha}]$}, \\ 1 - \frac{t}{T} & \textrm{for $t \in (2 T^{1-\alpha},T]$} \end{array}\right.
\end{equation}
(we assume that $T$ is sufficiently large such that $2 T^{1-\alpha} \leq T$). By~\eqref{rhsleft} and~\eqref{rhswriteex} we have
\begin{eqnarray}
& & \int_{T^{1-\alpha}}^{T} |\zeta(\alpha+it) A(t)|^2 w(t) ~dt \nonumber\\
& = & \int_{T^{1-\alpha}}^{T} \sum_{\substack{1 \leq k, \ell \leq K}} \sum_{\substack{1 \leq m, n \leq T}} \frac{1}{(mn)^\alpha} \cos \left( \left|\log \left(\frac{m d_k}{n d_\ell}\right) \right| t\right) w(t) ~dt \label{integ}\\
& & \qquad  + \mathcal{O} \left( \textup{err}(T) \int_0^T |A(t)|^2 ~dt \right). \nonumber
\end{eqnarray}
The following three lemmas show how to treat the frequencies of type 1--3 appearing in~\eqref{integ}. By Lemma~\ref{lemma2} the contribution of the ``good'' terms in~\eqref{integ} can be estimated in terms of the value of the GCD sum from Lemma~\ref{lemma1}. The subsequent two lemmas show that the contribution of the frequencies of type 2 is non-negative, and that the total (possibly negative) contribution of the frequencies of type 3 is ``small''.

\begin{lemma}[Lemma for type 1 frequencies] \label{lemma2}
We have
\begin{eqnarray} \label{lemma2equ}
& & \int_{T^{1-\alpha}}^{T} \underbrace{\sum_{\substack{1 \leq k, \ell \leq K}} \sum_{\substack{1 \leq m, n \leq T}}}_{\left| \log \left(\frac{m d_k}{n d_\ell}\right) \right| \leq \frac{1}{T}} \frac{1}{(mn)^\alpha} \cos \left( \left| \log \left(\frac{m d_k}{n d_\ell}\right) \right| t\right) w(t) ~dt \\
& \geq & \frac{KT}{4} \exp\left(\frac{M^{1-\alpha}}{2.72 (\log M)^\alpha} \right), \nonumber
\end{eqnarray}
provided that $T$ is sufficiently large.
\end{lemma}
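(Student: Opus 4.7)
The plan is to discard all terms on the left-hand side of~\eqref{lemma2equ} except those coming from a carefully chosen family of quadruples $(k,\ell,m,n)$ extracted from Lemma~\ref{lemma1}. For every type-1 quadruple the cosine argument $|\log(md_k/(nd_\ell))|\cdot t$ lies in $[0,1]$ on $t\in[T^{1-\alpha},T]$, so the cosine is at least $\cos(1)>0.54$; since $w(t)\geq 0$ as well, every term in~\eqref{lemma2equ} is non-negative and one may restrict attention to any convenient subfamily.

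For each $d_k\in\mathcal{D}$ (which is itself an element of $\mathcal{B}$) and each $b\in\mathcal{B}$ with $\delta(d_k,b)=R$, set
$$
m=\frac{b}{\gcd(d_k,b)},\qquad n=\frac{d_k}{\gcd(d_k,b)},\qquad \ell=\ell(b),
$$
where $\ell(b)\in\{1,\dots,K\}$ indexes the representative of the class $\mathcal{B}_j$ containing $b$. Then $md_k=nb$, so
$$
\frac{md_k}{nd_\ell}=\frac{b}{d_\ell}\in\left[1,1+\tfrac{1}{T}\right]
$$
by Lemma~\ref{lemma1b}, confirming that the quadruple is of type~1. The size bounds
$$
m,\,n\,\leq\,\prod_{r=1}^{M}p_r\,\leq\,\exp\bigl(O((\log\log T)(\log\log\log T))\bigr)\,\leq\,T
$$
follow from~\eqref{M} and the prime number theorem. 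Moreover $\gcd(m,n)=1$, so for fixed $k$ the pair $(m,n)$ is the lowest-terms representation of $b/d_k$ and the map $b\mapsto(m,n,\ell(b))$ is injective; distinct $k$ trivially produce distinct quadruples.

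Restricting~\eqref{lemma2equ} to this subfamily and invoking Lemma~\ref{lemma1} with $d_k$ playing the role of $b_k$ yields
$$
\sum_{k=1}^{K}\sum_{\substack{b\in\mathcal{B}\\ \delta(d_k,b)=R}}\frac{(\gcd(d_k,b))^{2\alpha}}{(bd_k)^{\alpha}}\,\geq\,K\exp\!\left(\frac{M^{1-\alpha}}{2.72(\log M)^{\alpha}}\right).
$$
A direct evaluation gives $\int_{T^{1-\alpha}}^{T}w(t)\,dt=T/2+O(T^{1-\alpha})$, so each surviving integral factor is at least $\cos(1)\cdot T/2>T/4$ for $T$ sufficiently large. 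Multiplying the two estimates produces the claimed bound. The delicate point, and the only real obstacle, is that Lemma~\ref{lemma1} furnishes a GCD bound summed over $\mathcal{B}$ whereas the outer sum of~\eqref{lemma2equ} is indexed by $\mathcal{D}$; the bridge is to let the \emph{inner} variable $b$ range over all of $\mathcal{B}$ and absorb the discrepancy $b/d_{\ell(b)}\in[1,1+1/T]$ into the admissible type-1 tolerance via Lemma~\ref{lemma1b}, so that the passage from $\mathcal{B}$ to its ``quotient'' $\mathcal{D}$ costs nothing.
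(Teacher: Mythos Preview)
Your approach mirrors the paper's: restrict to the subfamily of quadruples arising from pairs $(d_k,b)$ with $\delta(d_k,b)=R$, show each such quadruple is of type~1, and then invoke Lemma~\ref{lemma1}. The structure is correct, but your justification for $m,n\leq T$ contains a genuine error. You claim
$$
m,\,n\,\leq\,\prod_{r=1}^M p_r\,\leq\,\exp\bigl(O((\log\log T)(\log\log\log T))\bigr),
$$
which misreads the notation in~\eqref{M}: the symbol $\log^{(2)}T$ denotes the base-$2$ logarithm (this is made explicit in Section~\ref{secth2}), so $M\asymp\log T$, not $\log\log T$. Consequently $\prod_{r=1}^M p_r\approx e^{p_M}\approx e^{M\log M}$ is of order roughly $T^{c\log\log T}$, vastly larger than $T$, and the trivial bound $m,n\leq\prod_{r=1}^M p_r$ is useless here.

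The correct argument, which the paper gives (see~\eqref{lemma1ae} and~\eqref{mn}), uses the constraint $\delta(d_k,b)=R$ in an essential way: $m=b/\gcd(d_k,b)$ and $n=d_k/\gcd(d_k,b)$ are each products of at most $R$ distinct primes, each bounded by $p_M\leq M(\log M+\log\log M)$, so $m,n\leq\bigl(M(\log M+\log\log M)\bigr)^R$. With $R\asymp M^{1-\alpha}/(\log M)^\alpha$ from~\eqref{R} this yields $m,n\leq\exp\bigl(O(M^{1-\alpha}(\log M)^{1-\alpha})\bigr)\leq\sqrt{T}$ for large $T$. Once this step is repaired, the rest of your argument (nonnegativity of each summand, the integral factor $\geq\cos(1)\cdot T/2>T/4$, and injectivity of $b\mapsto(m,n)$ for fixed $k$) is fine and matches the paper's proof closely.
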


\begin{lemma}[Lemma for type 2 frequencies] \label{lemma3}
We have
$$
\int_{T^{1-\alpha}}^{T} \underbrace{\sum_{\substack{1 \leq k, \ell \leq K}} \sum_{\substack{1 \leq m, n \leq T}}}_{\left| \log \left(\frac{m d_k}{n d_\ell}\right) \right| \in \left(\frac{1}{T},\frac{1}{2T^{1-\alpha}}\right]} \frac{1}{(mn)^\alpha} \cos \left( \left| \log \left(\frac{m d_k}{n d_\ell}\right) \right| t\right) w(t)~dt \geq 0,
$$
provided that $T$ is sufficiently large.
\end{lemma}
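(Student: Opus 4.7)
Since each coefficient $1/(mn)^\alpha$ in the sum is positive, it suffices to prove that for every individual type 2 frequency $a \in \left(\frac{1}{T}, \frac{1}{2T^{1-\alpha}}\right]$ the integral
$$
I(a) := \int_{T^{1-\alpha}}^{T} \cos(at)\, w(t)\, dt
$$
is non-negative; the lemma then follows by linearity, after collecting all quadruples $(k,\ell,m,n)$ that produce the same frequency.

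\textbf{Explicit evaluation.} The plan is to split the weight into a smooth and an indicator piece,
$$
w(t) = \left(1 - \frac{t}{T}\right) + 2 \cdot \mathbf{1}_{[T^{1-\alpha},\, 2T^{1-\alpha}]}(t),
$$
so that $I(a) = I_1(a) + I_2(a)$. One integration by parts (with $u = 1 - t/T$, $dv = \cos(at)\,dt$) evaluates $I_1(a)$, while $I_2(a)$ is elementary. Writing $s := aT^{1-\alpha} \in (T^{-\alpha},\, 1/2]$ and combining the two pieces using the identity $2\sin(2s) - 3\sin(s) = \sin(s)\,(4\cos(s) - 3)$, the routine computation produces
$$
I(a) = \frac{\sin(s)\bigl(4\cos(s) - 3 + T^{-\alpha}\bigr)}{a} + \frac{\cos(s) - \cos(aT)}{a^2 T}.
$$

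\textbf{Positivity.} On $(0, 1/2]$ the quotient $\sin(s)/s$ is bounded below by an absolute positive constant, and since $\arccos(3/4) > 1/2$, we also have $4\cos(s) - 3 \geq 4\cos(1/2) - 3 > 0.5$. Using $1/a = T^{1-\alpha}/s$, the first summand is therefore at least of order $T^{1-\alpha}$. For the oscillatory second summand, combining $\cos(aT) \leq 1$ with the Taylor bound $1 - \cos(s) \leq s^2/2$ yields
$$
\frac{\cos(s) - \cos(aT)}{a^2 T} \geq -\frac{s^2/2}{a^2 T} = -\frac{T^{1-2\alpha}}{2}.
$$
Since $\alpha > 1/2$ forces $T^{1-2\alpha} = o(T^{1-\alpha})$, the positive main term dominates for $T$ sufficiently large, giving $I(a) > 0$.

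\textbf{Expected main obstacle.} The heart of the argument is the identity $2\sin(2s) - 3\sin(s) = \sin(s)\,(4\cos(s) - 3)$ together with the constraint $s \leq 1/2 < \arccos(3/4)$; this is precisely why the type 2 cutoff is placed at $1/(2T^{1-\alpha})$, and this matching is what makes the jump in the weight $w$ at $t = 2T^{1-\alpha}$ (with height exactly $2$) the correct choice. The delicate point is that the prefactor $1/(a^2 T)$ in the oscillatory term can be as large as $T$ near the lower endpoint $a = 1/T$; but $\cos(s) - \cos(aT)$ can be strongly negative only when $\cos(aT)$ is close to $1$, and in that regime the Taylor bound on $1 - \cos(s)$ exactly cancels the dangerous prefactor, leaving only an $O(T^{1-2\alpha})$ loss that is swallowed by the main positive term.
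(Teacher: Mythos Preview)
Your proof is correct and follows the same strategy as the paper's: reduce to a single frequency $a$, split $w$ as $(1-t/T) + 2\cdot\mathbf{1}_{[T^{1-\alpha},\,2T^{1-\alpha}]}$, and verify that the resulting integral is non-negative. The paper's execution is marginally slicker---instead of your explicit integration by parts it invokes the Fej\'er-type identity $\int_0^T (\cos at)(1-t/T)\,dt = (1-\cos aT)/(a^2T) \geq 0$ to get $I_1(a) \geq -\int_0^{T^{1-\alpha}}(\cos at)(1-t/T)\,dt \geq -T^{1-\alpha}$ directly, and then bounds the bump below pointwise by $2(\cos 1)\,T^{1-\alpha} > T^{1-\alpha}$---but the underlying idea (and the reason for the cutoff at $1/(2T^{1-\alpha})$ and the jump of height $2$) is identical.
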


\begin{lemma}[Lemma for type 3 frequencies] \label{lemma4}
We have
$$
\left|  \int_{T^{1-\alpha}}^{T} \underbrace{\sum_{\substack{1 \leq k, \ell \leq K}} \sum_{\substack{1 \leq m, n \leq T}}}_{\left| \log \left(\frac{m d_k}{n d_\ell}\right) \right| \in \left(\frac{1}{2T^{1-\alpha}},\infty\right)} \frac{1}{(mn)^\alpha} \cos \left( \left| \log \left(\frac{m d_k}{n d_\ell}\right) \right| t\right) w(t) ~dt\right| = \mathcal{O} \left(K^2 T^{2-2\alpha} \log T\right).
$$
\end{lemma}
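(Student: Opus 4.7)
The plan is to interchange the sums with the integral, bound the inner oscillatory integral by $\ll 1/a$ via a single integration by parts, and then reduce the problem to showing that for each fixed pair $(k,\ell)$,
$$S_{k\ell}:=\sum_{\substack{1\leq m,n\leq T,\\ a_{k\ell mn}>1/(2T^{1-\alpha})}}\frac{1}{(mn)^\alpha\,a_{k\ell mn}}\ll T^{2-2\alpha}\log T,$$
where $a_{k\ell mn}:=\bigl|\log(md_k/(nd_\ell))\bigr|$; summing over the $K^2$ pairs $(k,\ell)$ then yields the bound claimed. After swapping and applying the triangle inequality, the absolute value in Lemma~\ref{lemma4} is dominated by $\sum(mn)^{-\alpha}\bigl|\int_{T^{1-\alpha}}^T\cos(a_{k\ell mn}t)w(t)\,dt\bigr|$. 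Since $w$ is piecewise linear on the two intervals $[T^{1-\alpha},2T^{1-\alpha}]$ and $(2T^{1-\alpha},T]$ with $|w|\ll 1$, $|w'|=1/T$, a bounded jump at $2T^{1-\alpha}$, and $w(T)=0$, a single integration by parts on each smooth piece gives the uniform estimate $\bigl|\int_{T^{1-\alpha}}^T\cos(at)w(t)\,dt\bigr|\ll 1/a$ for every $a>0$, which combined with the type-3 cutoff reduces the left-hand side of Lemma~\ref{lemma4} to $\ll\sum_{k,\ell}S_{k\ell}$.

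To bound $S_{k\ell}$, set $\rho:=d_k/d_\ell$; since $S_{k\ell}=S_{\ell k}$ via the substitution $m\leftrightarrow n$, I may assume $\rho\geq 1$. Split according to whether $m\rho/n\in(1/2,2)$ or not. Outside this window $|\log(m\rho/n)|\geq\log 2$ and the contribution is trivially $\ll T^{2-2\alpha}$. Inside it, use the comparison $|\log(m\rho/n)|\asymp|m\rho-n|/n$; the type-3 hypothesis then forces $|m\rho-n|\gg m\rho/T^{1-\alpha}$, and for fixed $m$ the inner sum over $j=n-\lfloor m\rho\rfloor$ of $1/|j|$ over $m\rho/T^{1-\alpha}\ll|j|\ll m\rho$ produces $\ll\log T$. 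Splitting the outer $m$-sum at $m=T/(2\rho)$: the range $m\leq T/(2\rho)$, where $n\in(m\rho/2,2m\rho)\subset[1,T]$ and $n^{1-\alpha}\asymp(m\rho)^{1-\alpha}$, contributes (using $1-2\alpha<0$) $\ll\rho^{1-\alpha}(\log T)\sum_m m^{1-2\alpha}\ll\rho^{\alpha-1}T^{2-2\alpha}\log T$; the range $T/(2\rho)<m\leq 2T/\rho$, where $n\leq T$ forces $n^{1-\alpha}\leq T^{1-\alpha}$, contributes $\ll T^{1-\alpha}(\log T)\sum_m m^{-\alpha}\ll\rho^{\alpha-1}T^{2-2\alpha}\log T$. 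Both bounds are $\ll T^{2-2\alpha}\log T$ because $\rho\geq 1$ and $\alpha<1$.

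The main obstacle is precisely this uniformity in $(k,\ell)$: the ratio $\rho=d_k/d_\ell$ can grow faster than any fixed power of $\log T$, so a cruder estimate would leave an unwanted positive power of $\rho$ in the bound and the subsequent sum over $(k,\ell)$ would blow up. The algebraic cancellation $\rho^{1-\alpha}\cdot\rho^{2\alpha-2}=\rho^{\alpha-1}\leq 1$, produced by combining the factor $n^{1-\alpha}\asymp(m\rho)^{1-\alpha}$ with the divergent outer $m$-sum up to $T/(2\rho)$, is exactly what rescues the argument.
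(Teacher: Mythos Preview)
Your argument is correct and follows the same strategy as the paper's proof: bound the weighted oscillatory integral by $\ll 1/a$ (the paper computes it explicitly, you integrate by parts), reduce to estimating $S_{k\ell}$ uniformly in $(k,\ell)$, then split the $(m,n)$-sum into a ``far'' region where $|\log(m\rho/n)|\geq\log 2$ and a ``near'' region around $n\approx m\rho$, recovering exactly the cancellation $\rho^{1-\alpha}(T/\rho)^{2-2\alpha}=\rho^{\alpha-1}T^{2-2\alpha}$ that the paper obtains via its dyadic decomposition in $r$. One small imprecision worth tightening: when you write the near-region inner sum as $\sum 1/|j|\ll\log T$ you are tacitly identifying $|m\rho-n|$ with the integer $|j|=|n-\lfloor m\rho\rfloor|$, which fails for $j\in\{0,1\}$ where $|m\rho-n|$ may be as small as the type-3 cutoff $\asymp m\rho/T^{1-\alpha}$; these at most two exceptional $n$ contribute an extra $\ll T^{1-\alpha}/(m^{2\alpha}\rho^{\alpha})$ per $m$, whose sum over $m$ is $\ll T^{1-\alpha}$ and is harmlessly absorbed into the final bound (this is the same term that appears in the paper as the ``$+1$'' inside the integer-count, yielding the second line $T^{1-\alpha}\sum m^{-2\alpha}$).
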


Taking Lemmas~\ref{lemma2}--\ref{lemma4} for granted (the proofs are given in Section~\ref{seclemma} below) we finally obtain
\begin{eqnarray} 
& & \int_{T^{1-\alpha}}^{T} |\zeta(\alpha+it) A(t)|^2 w(t) ~dt \nonumber\\
& \geq & \frac{KT}{4} \exp\left(\frac{M^{1-\alpha}}{2.72 (\log M)^\alpha} \right) + \mathcal{O} \left(K^2 T^{2-2\alpha} \log T\right)+ \mathcal{O} \left(\textup{err}(T) \int_0^T |A(t)|^2 ~dt \right),  \label{lowerbound}
\end{eqnarray}
provided that $T$ is sufficiently large.\\

Now we will prove an upper bound for the square-integral of $A(t)$. By Lemma~\ref{lemma1c} we have
$$
|\log (d_k/d_\ell)| \geq \left(|k - \ell| - 1\right) \log \left(1+\frac{1}{T}\right) \geq \frac{|k - \ell| - 1}{2 T},
$$
provided that $T$ is sufficiently large. Thus we have
\begin{eqnarray}
\int_0^T |A(t)|^2~dt & = & \sum_{1 \leq k,\ell \leq K} \int_0^T \left(\frac{d_k}{d_\ell}\right)^{it}~dt \nonumber\\
& \leq & 3KT + \sum_{\substack{1 \leq k, \ell \leq K,\\|k-\ell| \geq 2}} \frac{2}{|\log (d_k/d_\ell)|} \nonumber\\
& \leq & 3KT + \sum_{1 \leq k \leq K} \sum_{v=1}^K \frac{8T}{v}, \nonumber\\
& \leq & 11KT (1+\log K), \label{l2upper}
\end{eqnarray}
provided that $T$ is sufficiently large. Note that by~\eqref{M} we have 
\begin{equation} \label{mup}
M^{1-\alpha} / (2.72 (\log M)^{\alpha}) > 0.36 ((2 \alpha-1)\log T)^{1-\alpha}/(\log \log T)^{\alpha}
\end{equation}
for sufficiently large $M$. Consequently, by~\eqref{canassume} and~\eqref{l2upper}, the third term in line~\eqref{lowerbound} is dominated by the first, and we have
\begin{equation} \label{forth2}
\int_{T^{1-\alpha}}^{T} |\zeta(\alpha+it) A(t)|^2 w(t) ~dt \geq \frac{KT}{5} e^{M^{1-\alpha} / (2.72 (\log M)^\alpha)}
\end{equation}
for sufficiently large $T$. Together with~\eqref{l2upper} and~\eqref{mup} this implies the existence of a value of $t \in [T^{1-\alpha},T]$ for which
$$
|\zeta(\alpha+it)|^2 \geq \exp \left(\frac{0.36 ((2 \alpha-1)\log T)^{1-\alpha}}{(\log \log T)^{\alpha}} \right),
$$
provided that $T$ is sufficiently large. This proves Theorem~\ref{th1}.

\section{Proofs of auxiliary lemmas for Theorem~\ref{th1}} \label{seclemma}

For the proofs in this section we will use the following form of the prime number theorem.
\begin{lemma}[{\cite[Theorem 8.8.4]{bach}}] \label{lemmabach}
Let $p_r$ denote the $r$-th prime number. Then we have
$$
p_r < r (\log r + \log \log r), \qquad \textrm{for $r \geq 6$.}
$$
\end{lemma}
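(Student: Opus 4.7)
The plan is to deduce the upper bound for $p_r$ from an explicit lower bound on the prime-counting function $\pi(x)$. Since $\pi(p_r) = r$, the inequality $p_r < r(\log r + \log \log r)$ is equivalent to saying that there are at least $r$ primes in the interval $[2, r(\log r + \log \log r)]$; that is,
$$
\pi\bigl(r(\log r + \log \log r)\bigr) \geq r.
$$
Thus it suffices to establish this counting inequality.

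First I would invoke an explicit Rosser--Schoenfeld type lower bound of the form
$$
\pi(x) > \frac{x}{\log x - 1/2} \qquad \textrm{for all } x \geq x_0,
$$
with a modest explicit threshold $x_0$ (such bounds can in turn be derived from the Chebyshev functions $\theta(x), \psi(x)$ by classical elementary arguments, or simply quoted). Applying the bound at $y := r(\log r + \log \log r)$, and writing $u = \log r$ and $v = \log \log r$, one has $\log y = u + v + \log(1 + v/u)$, so that
$$
\pi(y) > r \cdot \frac{u + v}{u + v - \tfrac{1}{2} + \log(1 + v/u)}.
$$
For $r$ sufficiently large, $v/u \to 0$, so $\log(1 + v/u) - 1/2 < 0$, the fraction exceeds $1$, and $\pi(y) > r$ follows.

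This handles all $r$ above some explicit threshold $R_0$. For the remaining range $6 \leq r \leq R_0$ the lemma is verified by direct inspection of a short table of the first few primes; this computational step is the reason the bound is stated only from $r = 6$ onward, since for very small $r$ the function $r(\log r + \log \log r)$ is too small (or undefined) to accommodate the claim.

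The main obstacle is arranging the constants so that the asymptotic argument kicks in early enough to keep the computational verification feasible. Matching the stated range $r \geq 6$ requires either a sharper Rosser--Schoenfeld bound (with a smaller subtracted constant, valid only from a larger $x_0$ onward) or an equivalent quantitative refinement of explicit estimates for $\theta(x) = \sum_{p \leq x} \log p$; either way, the delicate point is that the asymptotic $x/\log x$ alone is \emph{not} sufficient to force $\pi(y) \geq r$, because $\log y$ slightly exceeds $\log r + \log \log r$, so one needs the extra $-1/2$ in the denominator to beat this loss.
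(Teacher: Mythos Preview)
The paper does not prove this lemma at all: it is stated with a citation to Bach and Shallit's textbook and used as a black box, so there is no ``paper's own proof'' to compare against. The inequality itself is a classical result of Rosser, later sharpened by Rosser--Schoenfeld and Dusart.

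Your sketch is the standard route to such bounds and is correct in outline. Two small remarks. First, your computation actually shows more than you claim: since $\log(1+v/u) \leq v/u = (\log\log r)/(\log r)$ and the function $(\log u)/u$ is maximised at $u=e$ with value $1/e < 1/2$, the inequality $\log(1+v/u) < 1/2$ holds for \emph{every} $r \geq 3$, not merely for large $r$; so the asymptotic step is in fact uniform once $\log\log r > 0$. Second, the genuine constraint is the validity range of the explicit bound $\pi(x) > x/(\log x - 1/2)$, which in the Rosser--Schoenfeld form holds only for $x \geq 67$; since $y = r(\log r + \log\log r)$ already exceeds $67$ once $r \geq 20$, the residual finite check $6 \leq r \leq 19$ is tiny. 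Your identification of the key point---that the bare $x/\log x$ bound is too weak because $\log y$ slightly overshoots $\log r + \log\log r$, so one needs the subtracted $1/2$ in the denominator to compensate---is exactly right.
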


\begin{proof}[Proof of Lemma~\ref{lemma1}]
By Lemma~\ref{lemmabach} we have
\begin{equation} \label{primesize}
 p_{r} \leq M (\log M + \log \log M) \qquad \textrm{for $1 \leq r \leq M$},
\end{equation}
provided that $M$ is sufficiently large. For given $k \in \{1, \dots, N\}$ there exists $\binom{M}{R}$ indices $\ell \in \{1, \dots, N\}$ such that $\delta(b_k,b_\ell)=R$. Thus by~\eqref{primesize} we have
\begin{eqnarray}
& & \sum_{\substack{1 \leq \ell \leq N,\\ \delta(b_k,b_\ell)=R}} \frac{(\gcd(b_k,b_\ell))^{2 \alpha}}{(b_k b_\ell)^\alpha} \nonumber\\
& \geq & \sum_{\substack{1 \leq \ell \leq N,\\ \delta(b_k,b_\ell)=R}} (M (\log M+\log \log M))^{-\alpha R} \nonumber\\
& = & \binom{M}{R} (M (\log M+\log \log M))^{-\alpha R} \label{gcdlob1} \\
& \geq & \frac{M^{M+1/2}}{3 (M-R)^{M-R+1/2} R^{R+1/2}  (M (\log M+\log \log M))^{\alpha R}} \label{gcdlob2} \\
& \geq & \frac{1}{3\sqrt{M}} ~\exp \left(R \log M - R \log R - \alpha R \log (M (\log M + \log \log M)) \right), \label{gcdlob3}
\end{eqnarray}
where to get from~\eqref{gcdlob1} to~\eqref{gcdlob2} we used a version of Stirling's formula which states that
$$
n^{n+1/2} \exp\left(-n+\frac{1}{12n+1} \right) < \frac{n!}{\sqrt{2 \pi}} < n^{n+1/2} \exp \left(-n+ \frac{1}{12 n}\right), \qquad \textrm{for $n = 1,2,\dots$}
$$
(see~\cite{robbins}). For the specific choice of $R$ from~\eqref{R} we have
\begin{eqnarray*}
& & R \log M - R \log R - \alpha R \log (M (\log M + \log \log M)) \\
& \geq & R \log M - (1-\alpha) R \log M + R + \alpha R \log (\log M + \log \log M) \nonumber\\
& & \qquad - \alpha R \log M - \alpha R \log( \log M + \log \log M) \\
& = & R.
\end{eqnarray*}
Thus by~\eqref{gcdlob3} we have
\begin{eqnarray*}
\sum_{\substack{1 \leq \ell \leq N,\\ \delta(b_k,b_\ell)=R}} \frac{(\gcd(b_k,b_\ell))^{2 \alpha}}{(b_k b_\ell)^\alpha} & \geq & \frac{1}{3 \sqrt{M}} e^R \\
& \geq & e^{M^{1-\alpha} / (2.72 (\log M)^{\alpha})}
\end{eqnarray*}
for sufficiently large $T$, which proves the lemma.
\end{proof}

\begin{proof}[Proof of Lemma~\ref{lemma1a}]
W.l.o.g.\ we assume that $b_k \geq b_\ell$. By assumption we have $1 \leq \delta(b_k,b_\ell) \leq 2R$, which implies that $b_k \neq b_\ell$, and which also implies that the denominator of the reduced fraction $b_k/b_\ell$ is the product of a most $2R$ different primes. By Lemma~\ref{lemmabach} each of these primes is bounded above by $M (\log M + \log \log M)$, provided that $T$ is sufficiently large. Consequently, the denominator of the reduced fraction $b_k/b_\ell$ is bounded above by
\begin{equation} \label{lemma1ae}
\left(M (\log M + \log \log M)\right)^{2R} \leq e^{3 R \log M} \leq e^{3 M^{1-\alpha} (\log M)^{1-\alpha}} \leq \sqrt{T}
\end{equation}
for sufficiently large $T$, where we used~\eqref{M}. Thus the ratio $b_k/b_\ell$ cannot be smaller than $(\sqrt{T}+1)/\sqrt{T} = 1+1/\sqrt{T}$. However, by construction all numbers which are contained in the same set $\mathcal{B}_j$ for some $j$ have a ratio which is smaller than $1+1/T$. Consequently, $b_k$ and $b_\ell$ cannot be contained in the same set $\mathcal{B}_j$, provided that $T$ is sufficiently large.
\end{proof}

\emph{Lemma~\ref{lemma1b} and Lemma~\ref{lemma1c}:} As noted before their statement, these lemmas follow directly from our definitions and do not require a detailed proof.

\begin{proof}[Proof of Lemma~\ref{lemma2}]
Note that whenever 
$$
\left| \log \left(\frac{m d_k}{n d_\ell}\right) \right| \leq \frac{1}{T},
$$
then
$$
\cos \left( \left| \log \left(\frac{m d_k}{n d_\ell}\right) \right| t\right) \geq 0 \qquad \textrm{for} \qquad t \in [0,T],
$$
and consequently 
\begin{equation} \label{nonneg}
\int_{T^{1-\alpha}}^{T} \cos \left( \left| \log \left(\frac{m d_k}{n d_\ell}\right) \right| t\right) w(t) ~dt \geq 0.
\end{equation}
Thus all summands in~\eqref{lemma2equ} are non-negative, and we can simply neglect those which we do not want to account for.\\

Throughout this proof, let $k \in \left\{1, \dots, K\right\}$ be fixed. For this $k$, we want to calculate 
\begin{equation} \label{lemma2e}
\sum_{\ell=1}^K  \sum_{\substack{1 \leq m, n \leq T,\\ \left| \log \left(\frac{m d_k}{n d_\ell}\right) \right| \leq \frac{1}{T}}} \int_{T^{1-\alpha}}^T \frac{1}{(mn)^\alpha} \cos \left(\left| \log \left(\frac{m d_k}{n d_\ell}\right) \right| t\right) w(t) ~dt.
\end{equation}
There exists a $j \in \N^+$ such that $d_k \in \mathcal{B}_j$. By construction there also exists an element of $\mathcal{B}$ which is equal to $d_k$; in other words, there exists an index $\bar{k} \in \{1, \dots N\}$ such that $b_{\bar{k}} = d_k$. There exist $\binom{M}{R}$ elements of $\mathcal{B}$ whose distance $\delta$ from $b_{\bar{k}}$ is precisely $R$, where $R$ is the number from~\eqref{R}. We denote them by $b_{\bar{k}_1}, \dots, b_{\bar{k}_H}$, where $H = \binom{M}{R}$. There also exist indices $j_1, \dots, j_H \in \N^+$ such that
$$
b_{\bar{k}_1} \in \mathcal{B}_{j_1}, \dots, b_{\bar{k}_H} \in \mathcal{B}_{j_H}.
$$
If we pick two numbers from $b_{\bar{k}_1}, \dots, b_{\bar{k}_H}$, then their distance $\delta$ can be at most $2R$; consequently, by Lemma~\ref{lemma1a}, all indices $j_1, \dots, j_H$ must be different. Furthermore, also by Lemma~\ref{lemma1a}, all the indices $j_1, \dots, j_H$ must be different from $j$. Consequently, there exist indices $k_1, \dots, k_H \in \{1, \dots, K\}$, all of which are different from each other and are different from $k$, such that 
$$
d_{k_1} \in \mathcal{B}_{j_1}, \dots, d_{k_H} \in \mathcal{B}_{j_H}.
$$

By assumption, $\delta(b_{\bar{k}}, b_{\bar{k}_1}) = R$. This implies that both the numbers
$$
\frac{b_{\bar{k}}}{\gcd(b_{\bar{k}},b_{\bar{k_1}})} \qquad \textrm{and} \qquad \frac{b_{\bar{k}_1}}{\gcd(b_{\bar{k}},b_{\bar{k_1}})}
$$
are the product of at most $R$ different prime factors. By Lemma~\ref{lemmabach} each of those prime factors is bounded above by $M (\log M + \log \log M)$, assuming that $T$ is sufficiently large. As in the calculation following~\eqref{lemma1ae}, this implies that
\begin{equation} \label{mn}
m_1:= \frac{b_{\bar{k}_1}}{\gcd(b_{\bar{k}},b_{\bar{k_1}})} \leq T, \qquad n_1 := \frac{b_{\bar{k}}}{\gcd(b_{\bar{k}},b_{\bar{k_1}})} \leq T,
\end{equation}
always assuming that $T$ is sufficiently large. Furthermore, we have 
\begin{equation} \label{mn2}
m_1 b_{\bar{k}} = n_1 b_{\bar{k}_1}.
\end{equation}
By Lemma~\ref{lemma1a} we have 
$$
\frac{b_{\bar{k}_1}}{d_{k_1}} \in \left[1, 1+\frac{1}{T} \right].
$$
Thus by~\eqref{mn2}, and recalling that $d_k = b_{\bar{k}}$, we have
$$
\frac{m_1 d_{k}}{n_1 d_{k_1}} \in \left[1, \left(1+\frac{1}{T}\right) \right],
$$
which implies that
\begin{equation} \label{lowerfreq}
\left| \log \left(\frac{m_1 d_k}{n_1 d_{k_1}}\right) \right| \leq \frac{1}{T}.
\end{equation}
Consequently for the contribution of the pair of indices $(k,k_1)$ to the sum in~\eqref{lemma2e}, by~\eqref{nonneg},~\eqref{mn} and~\eqref{lowerfreq} we have
\begin{eqnarray*}
& & \sum_{\substack{1 \leq m, n \leq T,\\ \left| \log \left(\frac{m d_k}{n d_{k_1}}\right) \right| \leq \frac{1}{T}}} \int_{T^{1-\alpha}}^T \frac{1}{(mn)^\alpha} \cos \left(\log \left| \left(\frac{m d_{k}}{n d_{k_1}}\right) \right| t\right)   w(t)~dt \\
& \geq & \frac{1}{(m_1 n_1)^\alpha} \int_{T^{1-\alpha}}^T \cos \left(\log \left| \left(\frac{m_1 d_k}{n_1 d_{k_1}}\right) \right| t\right) w(t)~dt \\
& \geq & \frac{(\gcd(b_{\bar{k}},b_{\bar{k_1}}))^{2 \alpha}}{(b_{\bar{k}} b_{\bar{k}_1})^\alpha} \underbrace{\int_{T^{1-\alpha}}^T (\cos 1) \left(1 - \frac{t}{T} \right) ~dt}_{\geq T/4 \textrm{ for sufficiently large $T$}}.
\end{eqnarray*}
In the same way we get a lower bound for the contribution of the pairs of indices $(k,k_2), \dots, (k,k_H)$. Note again, as mentioned above, that all indices $k_1, \dots, k_H$ are different from each other. Consequently, for fixed $k \in \{1, \dots, K\}$, the expression in~\eqref{lemma2e} is bounded below by
\begin{eqnarray*}
& & \sum_{\ell \in \{k_1, \dots, k_H\}} \sum_{\substack{1 \leq m, n \leq T,\\ \left| \log \left(\frac{m d_k}{n d_\ell}\right) \right| \leq \frac{1}{T}}} \int_{T^{1-\alpha}}^T \frac{1}{(mn)^\alpha} \cos \left(\left| \log \left(\frac{m d_k}{n d_\ell}\right) \right| t\right) w(t) ~dt \\
& \geq & \frac{T}{4} \sum_{\ell \in \{\bar{k}_1, \dots, \bar{k}_H \}} \frac{(\gcd(b_{\bar{k}},b_{\ell}))^{2 \alpha}}{(b_{\bar{k}} b_{\ell})^\alpha} \\
& = & \frac{T}{4} \sum_{\substack{1 \leq \ell \leq N,\\ \delta(b_{\bar{k}},b_{\ell})=R}} \frac{(\gcd(b_{\bar{k}},b_{\ell}))^{2 \alpha}}{(b_{\bar{k}} b_{\ell})^\alpha}.
\end{eqnarray*}
Consequently by Lemma~\ref{lemma1} we obtain the result that, still for fixed $k$, the expression in~\eqref{lemma2e} is bounded below by
$$
\frac{T}{4} \exp\left(\frac{M^{1-\alpha}}{2.72 (\log M)^\alpha} \right).
$$
Finally, summing over all possible values of $k$, we arrive at the conclusion of Lemma~\ref{lemma2}.
\end{proof}

\begin{proof}[Proof of Lemma~\ref{lemma3}]
The construction of the weight function $w(t)$ is based on the fact that we have
\begin{equation} \label{cosa}
\int_0^T (\cos a t) \left(1 - \frac{t}{T} \right) ~dt = \frac{1 - \cos a T}{a^2 T} \geq 0
\end{equation}
for all $a > 0$. Thus, under the assumption that $0 < a \leq 1 / (2 T^{1-\alpha})$, we have
\begin{eqnarray*}
& & \int_{T^{1-\alpha}}^{T} (\cos a t) w(t) ~dt \label{w0}\\
& = & \int_{T^{1-\alpha}}^{T} (\cos a t) \left( 1 - \frac{t}{T} \right) dt + 2 \int_{T^{1-\alpha}}^{2 T^{1-\alpha}}  \underbrace{\cos a t}_{\geq~ \cos 1 ~\geq ~0.54} ~dt \\
& \geq & - \int_0^{T^{1-\alpha}} (\cos a t) \left(1 - \frac{t}{T} \right) ~dt  + T^{1-\alpha} \\
& \geq & 0.
\end{eqnarray*}
As a consequence we obtain
\begin{eqnarray*}
& & \int_{T^{1-\alpha}}^{T} \underbrace{\sum_{\substack{1 \leq k, \ell \leq K}} \sum_{\substack{1 \leq m, n \leq T}}}_{\left| \log \left(\frac{m d_k}{n d_\ell}\right) \right| \in \left(\frac{1}{T},\frac{1}{2 T^{1-\alpha}}\right]} \frac{1}{(mn)^\alpha} \cos \left( \left| \log \left(\frac{m d_k}{n d_\ell}\right) \right| t\right) w(t) ~dt  \geq 0, 
\end{eqnarray*}
which proves the lemma.
\end{proof}

\begin{proof}[Proof of Lemma~\ref{lemma4}]
Let $k,\ell \in \{1, \dots, K\}$ be fixed. The proof boils down to finding an upper bound for 
\begin{equation} \label{returi}
\sum_{\substack{1 \leq m,n \leq T, \\\left| \log \left(\frac{m d_k}{n d_\ell}\right) \right| \in \left(\frac{1}{2T^{1-\alpha}},\infty\right)}} \frac{1}{m^\alpha n^\alpha} \frac{1}{\left| \log \left(\frac{m d_k}{n d_\ell}\right) \right|}.
\end{equation}
Proving this upper bound is a somewhat tedious exercise, but does not require any novel ideas; it can be seen as a variant of the Montgomery--Vaughan inequality, where all indices $(m,n)$ accounting for large contributions are left out. We start by assuming w.l.o.g.\ that $k \geq \ell$; that is, we also have $d_k \geq d_\ell$. In a first step we investigate the contribution of those indices $m,n$ for which $n \geq m d_k/d_\ell$. We have
\begin{eqnarray}
& & \sum_{\substack{1 \leq m,n \leq T, \\\left| \log \left(\frac{m d_k}{n d_\ell}\right) \right| \in \left(\frac{1}{2T^{1-\alpha}},\infty\right),\\n \geq m d_k / d_\ell}} \frac{1}{m^\alpha n^\alpha} \frac{1}{\left| \log \left(\frac{m d_k}{n d_\ell}\right) \right|} \label{desest}\\
& = & \sum_{\substack{1 \leq m,n \leq T, \\ \log \left(\frac{n d_\ell}{m d_k}\right) \in \left(\frac{1}{2T^{1-\alpha}},\infty\right),\\ m d_k / d_\ell \leq n \leq 2 m d_k / d_\ell}} \frac{1}{m^\alpha n^\alpha} \frac{1}{\log \left(\frac{n d_\ell}{m d_k}\right)} + \sum_{\substack{1 \leq m,n \leq T, \\ \log \left(\frac{n d_\ell}{m d_k}\right) \in \left(\frac{1}{2T^{1-\alpha}},\infty\right),\\ 2 m d_k / d_\ell < n}} \frac{1}{m^\alpha n^\alpha} \frac{1}{\log \left(\frac{n d_\ell}{m d_k}\right)} \label{secs}
\end{eqnarray}
For the second sum in~\eqref{secs} we have
\begin{equation} \label{comb1}
\sum_{\substack{1 \leq m,n \leq T, \\ \log \left(\frac{n d_\ell}{m d_k}\right) \in \left(\frac{1}{2T^{1-\alpha}},\infty\right),\\ 2 m d_k / d_\ell < n}} \frac{1}{m^\alpha n^\alpha} \underbrace{\frac{1}{\log \left(\frac{n d_\ell}{m d_k}\right)}}_{\leq 1 / \log 2}  \ll \sum_{1 \leq m,n, \leq T}  \frac{1}{m^\alpha n^\alpha} \ll T^{2 - 2 \alpha}. 
\end{equation}
The first sum in~\eqref{secs} is more difficult to estimate. Note that $\log \left(\frac{n d_\ell}{m d_k}\right) > \frac{1}{2T^{1-\alpha}}$ implies that $n d_\ell / (m d_k) > 1 + 1/(2 T^{1-\alpha})$. Thus, for sufficiently large $T$ we have
\begin{eqnarray}
& & \sum_{\substack{1 \leq m,n \leq T, \\ \log \left(\frac{n d_\ell}{m d_k}\right) \in \left(\frac{1}{2T^{1-\alpha}},\infty\right),\\ m d_k / d_\ell \leq n \leq 2 m d_k / d_\ell}} \frac{1}{m^\alpha n^\alpha} \frac{1}{\log \left(\frac{n d_\ell}{m d_k}\right)} \nonumber\\
& \leq & \sum_{1 \leq m \leq \frac{T d_\ell}{d_k}} \frac{1}{m^\alpha} \sum_{1  \leq r \leq 2 + (1-\alpha) \log^{(2)} T} \sum_{\substack{1 \leq n \leq T,\\n \in \left( \frac{m d_k}{d_\ell} \left( 1 + \frac{2^{r-1}}{2 T^{1-\alpha}}\right),\frac{m d_k}{d_\ell} \left( 1 + \frac{2^{r}}{2 T^{1-\alpha}}\right)\right]}} \frac{1}{n^\alpha} \frac{1}{\log \left( 1 + \frac{2^{r-1}}{2 T^{1-\alpha}}\right)} \nonumber\\
& \ll & \sum_{1 \leq m \leq \frac{T d_\ell}{d_k}} \frac{1}{m^\alpha} \sum_{1  \leq r \leq 2 + (1-\alpha) \log^{(2)} T} \left( \frac{m d_k}{d_\ell} \frac{2^{r-1}}{2 T^{1-\alpha}} + 1 \right) \left(\frac{m d_k}{d_\ell}\right)^{-\alpha} \frac{T^{1-\alpha}}{2^{r-1}} \nonumber\\
& = & \sum_{1 \leq m \leq \frac{T d_\ell}{d_k}} \frac{1}{m^\alpha} \sum_{1  \leq r \leq 2 + (1-\alpha) \log^{(2)} T} \left(\frac{m d_k}{d_\ell}\right)^{1-\alpha} \frac{2^{r-1}}{2 T^{1-\alpha}} \frac{T^{1-\alpha}}{2^{r-1}} \nonumber\\
& & \qquad +  \sum_{1 \leq m \leq \frac{T d_\ell}{d_k}} \frac{1}{m^\alpha} \sum_{1  \leq r \leq 2 + (1-\alpha) \log^{(2)} T} \left(\frac{m d_k}{d_\ell}\right)^{-\alpha} \frac{T^{1-\alpha}}{2^{r-1}} \nonumber\\
& \ll & \left(\frac{d_k}{d_\ell}\right)^{1-\alpha} (\log T) \sum_{1 \leq m \leq \frac{T d_\ell}{d_k}} m^{1-2 \alpha} + T^{1-\alpha} \sum_{1 \leq m \leq \frac{T d_\ell}{d_k}} m^{-2\alpha} \nonumber\\
& \ll & T^{2-2\alpha} \log T. \label{comb2}
\end{eqnarray}
Combining~\eqref{comb1} and~\eqref{comb2} we see that the sum in~\eqref{desest} is of order $\ll T^{2-2\alpha} \log T$. Now it is necessary to estimate the contribution to the sum in~\eqref{returi} of those pairs of indices $(m,n)$ for which $n \leq m d_k/d_\ell$. This can be done in the same way as in our calculations for the case $n \geq m d_k/d_\ell$ (just in a sort of mirror-inverted way), and we omit the details. Overall we have
\begin{equation} \label{logT}
\sum_{\substack{1 \leq m,n \leq T, \\\left| \log \left(\frac{m d_k}{n d_\ell}\right) \right| \in \left(\frac{1}{2T^{1-\alpha}},\infty\right)}} \frac{1}{m^\alpha n^\alpha} \frac{1}{\left| \log \left(\frac{m d_k}{n d_\ell}\right) \right|} \ll T^{2-2\alpha} \log T.
\end{equation} 

Recalling the definition of the weight function $w$ (see~\eqref{w}), for any $a \geq 0$ we have
\begin{eqnarray*}
& & \left|  \int_{T^{1-\alpha}}^{T}  \left( \cos a t\right) w(t) ~dt\right| \\
& = & \left| \int_{T^{1-\alpha}}^{T} \left( \cos a t\right) \left(1 - \frac{t}{T} \right) ~dt + 2 \int_{T^{1-\alpha}}^{2 T^{1-\alpha}} \left(  \cos a t\right) ~dt\right| \\
& = & \frac{\cos (a T^{1-\alpha}) - \cos (a T) + a (T^{1-\alpha} - T) \sin (a T^{1-\alpha})}{a^2 T} + 2 \frac{\sin (aT) - \sin (a T^{1-\alpha})}{a} \\
& \leq & \frac{2}{a^2 T} + \frac{5}{a},
\end{eqnarray*}
where the last expression is bounded by $7/a$ if we assume that $a \geq 1/T$. Thus by~\eqref{logT} we have
\begin{eqnarray*}
& & \left| \int_{T^{1-\alpha}}^{T} \underbrace{\sum_{\substack{1 \leq k, \ell \leq K}} \sum_{\substack{1 \leq m, n \leq T}}}_{\left| \log \left(\frac{m d_k}{n d_\ell}\right) \right| \in \left(\frac{1}{2T^{1-\alpha}},\infty\right)} \frac{1}{(mn)^\alpha} \cos \left( \left| \log \left(\frac{m d_k}{n d_\ell}\right) \right| t\right) w(t) ~dt\right| \\
& \leq & 7 \sum_{\substack{1 \leq k, \ell \leq K}} \sum_{\substack{1 \leq m, n \leq T, \\ \left| \log \left(\frac{m d_k}{n d_\ell}\right) \right| \in \left(\frac{1}{2T^{1-\alpha}},\infty\right)}} \frac{1}{m^\alpha n^\alpha} \frac{1}{\left| \log \left(\frac{m d_k}{n d_\ell}\right) \right|} \\
& \ll & K^2 T^{2-2\alpha} \log T,
\end{eqnarray*}
which proves the lemma.
\end{proof}

\section{Proof of Theorem~\ref{th2}} \label{secth2}
We use a standard method for estimating the measure of those $t$ for which $|\zeta(\alpha+it)|$ is ``large''. Let $\tau$ satisfying~\eqref{tau} be given. We use exactly the same construction as in the proof of Theorem~\ref{th1}, but with the definition of $M$ in~\eqref{M} replaced by 
$$
M = \left\lceil \beta \log^{(2)} T \right\rceil,
$$
where we chose
$$
\beta = \beta(\tau) = \left(6 \tau\right)^{\frac{1}{1-\alpha}}.
$$
Here, and in the sequel, $\log^{(2)}$ denotes the logarithm in base 2. Then by~\eqref{tau} we have $\beta \leq 2 \alpha -1$, which means that everything works out as in the proof of Theorem~\ref{th1}. As in the proof of Theorem~\ref{th1} we obtain
\begin{equation} \label{ok1}
\int_0^T |\zeta(\alpha+it) A(t)|^2 \geq \frac{K T}{5} \exp\left(\frac{M^{1-\alpha}}{2.72 (\log M)^\alpha} \right)
\end{equation}
and
\begin{equation} \label{ok}
\int_0^T |A(t)|^2~dt = \mathcal{O} \left( KT (1+\log K) \right)
\end{equation}
for sufficiently large $T$. Let $F_{\alpha,\tau}$ be defined as in~\eqref{falpha}. Then we have
\begin{eqnarray}
&& \int_0^T |(\zeta(\alpha+it) A(t)|^2 ~dt \nonumber\\
& = & \int_{F_{\alpha,\tau}} |(\zeta(\alpha+it) A(t)|^2 ~dt + \int_{[0,T] \backslash F_{\alpha,\tau}} |(\zeta(\alpha+it) A(t)|^2 ~dt. \label{integralsb}
\end{eqnarray}
The second integral in~\eqref{integralsb} is bounded above by
$$
\exp \left(\frac{2 \tau (\log T)^{1 - \alpha}}{(\log \log T)^\alpha} \right) \underbrace{\int_0^T |A(t)|^2~dt}_{= \mathcal{O} \left(KT (1+\log K)\right) \textrm{ by~\eqref{ok}}}.
$$
By our choice of $\beta$ we have $\beta^{1-\alpha}/2.72 > 2 \tau$, which, in view of~\eqref{ok1}, implies that the main contribution to the sum of the two integrals in~\eqref{integralsb} cannot come from the second integral (provided that $T$ is sufficiently large). As a consequence we must have
$$
\int_{F_{\alpha,\tau}} |(\zeta(\alpha+it) A(t)|^2 ~dt \gg KT \exp\left(\frac{M^{1-\alpha}}{2.72 (\log M)^\alpha} \right).
$$
Since for $t \in [0,T]$ we have $|\zeta(\alpha+it)|^2 \ll T^{2-2\alpha}$ and $|A(t)|^2 \leq K^2 \leq K 2^M \ll K T^\beta$, this means that
$$
\textup{meas} (F_{\alpha,\tau}) \geq \frac{K T}{T^{2 - 2 \alpha} K T^{\beta}} \geq T^{2 \alpha - 1 - \beta}
$$
for sufficiently large $T$. This proves Theorem~\ref{th2}. Note that under the Riemann Hypothesis, using~\eqref{riemupp}, we would get a much better bound for the measure of $F_{\alpha,\tau}$. 

\section{Concluding remarks} \label{secconc}

It is interesting that Soundararajan was able to recapture Montgomery's lower bounds without the necessity of applying such an involved construction as the one in the present paper. This is due to the fact that in the case $\alpha = 1/2$ it is sufficient to use a Dirichlet polynomial of length $\ll T^\mu$ for some (small) $\mu$ as a resonator function. In contrast, in our construction for the proof of Theorem~\ref{th1} we could use a Dirichlet polynomial which is the sum of $\ll T^\mu$ terms, but the length of the polynomial is much larger than $T$ (namely, of order roughly $T^{\mu \log T}$). This difference between the cases $\alpha=1/2$ and $\alpha \in (1/2,1)$ also appears in the problem concerning GCD sums; more precisely, in the two problems concerning the maximal order of the sums
$$
\sum_{1 \leq k,\ell \leq N} a_k a_\ell \frac{(\gcd(k,\ell))^{2 \alpha}}{(k \ell)^\alpha} \qquad \textrm{and} \qquad \sum_{1 \leq k, \ell \leq N} a_k a_\ell \frac{(\gcd(n_k,n_\ell))^{2 \alpha}}{(n_k n_\ell)^\alpha},
$$
respectively. Here in both instances we require that $\sum_{k=1}^N a_k^2 \leq 1$, and in the second instance the numbers $\{n_1, \dots, n_N\}$ may be any set of distinct positive integers. In the case $\alpha \in (1/2,1)$ the optimal upper bounds for these problems are significantly different; they are roughly
$$
\exp \left( \frac{c_\alpha (\log N)^{1-\alpha}}{\log \log N}\right) \qquad \textrm{and} \qquad \exp \left( \frac{c_\alpha (\log N)^{1-\alpha}}{(\log \log N)^{\alpha}}\right),
$$
respectively (see~\cite{hilber} for the first result, and~\cite{abs,lewko} for the second). This reflects precisely the difference between the lower bounds for the maximum of the Riemann zeta function obtained by Hilberdink and the ones obtained in the present paper. In the case $\alpha=1/2$, somewhat unexpectedly, the gap between the upper bounds for these two problems disappears: by~\cite{hilber}, the optimal upper bound for the first problem in the case $\alpha=1/2$ is
$$
\exp \left(\sqrt{\frac{c \log N}{\log \log N}} \right), 
$$
and the optimal upper bound for the second problem is most likely of the same order (this problem is not completely solved yet, and an additional multiplicative factor of order at most $\sqrt{\log \log \log N}$ may be necessary in the exponential term; see~\cite{bond}). This is an interesting phenomenon, which deserves further investigation.\\

We add a few remarks concerning possible improvements of our method. Of course it would be desirable to further increase the value of $M$, which would result in an improvement of Theorem~\ref{th1}. In this context, it is interesting to note that our choice of the size of $M$ is not nearly critical in the argument surrounding the replacement of the set $\mathcal{B}$ by $\mathcal{D}$. The critical condition in this part of the argument is (roughly speaking) that $M^R \ll T$. The ``correct'' choice of $R$ is $R \approx M^{1-\alpha}/(\log M)^\alpha$, and thus we require that $e^{M^{1-\alpha} (\log M)^{1 - \alpha}} \ll T$. This would allow a much larger choice of $M$ than the choice $M \approx \log T$ which we have actually made in~\eqref{M}. Instead, what is really limiting the size of~\eqref{M} in our argument is the contribution of the ``type 2'' and ``type 3'' frequencies in Lemmas~\ref{lemma3} and~\ref{lemma4}. This contribution is difficult to estimate, in particular in the case of the large number of 
summands in Lemma~\ref{lemma4}. On the other hand, one would expect that this contribution is somewhat irregular and that positive and negative contributions cancel out to a certain degree. Accordingly, a more detailed analysis in Lemma~\ref{lemma3} and~\ref{lemma4}, or possibly a randomized definition of the integration range and/or of the sets $\mathcal{B}$ and $\mathcal{D}$, could lead to a better result.

\section*{Note added in proof}

In a recent manuscript, Bondarenko and Seip \cite{bond2} proved that \eqref{montlow} can be improved to
\begin{equation} \label{imprlo}
\max_{0 \leq t \leq T} \left| \zeta(1/2+it) \right| = \Omega \left( \exp \left(\frac{c \sqrt{\log T \log \log \log T}}{\sqrt{\log \log T}} \right) \right) \qquad \textrm{as $t \to \infty$.}
\end{equation}
Their argument uses ideas from the present paper, but their proof is much less cumbersome. In particular, they return from the second moment of $\zeta$ (as in the present paper and in Hilberdink's paper) to the first moment (as in Soundararajan's paper), which allows them to use a smoothing function (as in Soundararajan's paper) and to avoid the intricated classification of frequencies as in Section~\ref{secproof} of the present paper. Still, the improved lower bound~\eqref{imprlo} in~\cite{bond2} is not so much a consequence of the improved resonance argument, but follows from improved lower bounds for GCD sums for parameter $\alpha=1/2$.\\

I want to thank the referee for reading my paper very carefully. The referee notes that the method depends heavily on the positivity of the coefficients of the Dirichlet poylnomial which is used to approximate the zeta function, and asks the following questions: Is it possible to establish results for more general L-functions? Is it possible to obtain results in short intervals? Can this method be adapted to extend the results of Lamzouri \cite{lamz} on the value distribution of $\zeta(\alpha+it)$ at the extreme tails of the distributions?

\section*{Acknowledgments}

The research work for this paper was carried out during the year which I spent as a guest at the University of Kobe in Japan. I want to thank Katusi Fukuyama and the administrative staff of the Department of Mathematics of Kobe University, who made this stay not only possible, but turned it into a very pleasant one for me and my family.


\end{document}